\newtheorem{thm}{Theorem}[section]
\newtheorem{lem}[thm]{Lemma}
\newtheorem{cor}[thm]{Corollary}
\theoremstyle{definition}										
\newtheorem{mydef}[thm]{Definition}
\newtheorem{rem}[thm]{Remark}
\newtheorem{ex}[thm]{Example}
\numberwithin{equation}{section}								
\newcommand{\bb}[1]{\mathbb{#1}}								
\newcommand{\ii}{\textup{i}}									
\newcommand{\set}[4]{\left\{ #1 \right\}_{#2 = #3}^{#4}}				
\newcommand{\diag}[1]{\operatorname{\rm diag}\left( #1 \right)}			
\newcommand{\rk}[1]{\operatorname{\rm rank}\left( #1 \right)}				
\newcommand{\sr}[1]{\rho\left(#1\right)}							
\newcommand{\sig}[1]{\sigma \left( #1 \right)}						
\newcommand{\peri}[1]{\pi\left(#1\right)}							
\newcommand{\dg}[1]{\Gamma \left( #1 \right)}						
\newcommand{\mat}[2]{M_{#1}(#2)}								
\newcommand{\jordan}[2]{J_{#1}{\left( #2 \right)}}					
\newcommand{\rinf}[1]{{#1 \rightarrow \infty}}						
\newcommand{\inv}[1]{#1^{-1}}								
\newcommand{\hyp}[2]{\hyperref[#1]{{\rm #2 \ref*{#1}}}}				
\journal{Linear Algebra and its Applications}
\begin{document}
\begin{frontmatter}
\title{Matrix Roots of Imprimitive Irreducible Nonnegative Matrices} 

\author[addy1]{Judith J. McDonald}										
\ead{jmcdonald@math.wsu.edu}
\ead[url]{http://www.math.wsu.edu/math/faculty/jmcdonald/}

\author[addy2]{Pietro Paparella\corref{corpp}}
\ead{pietrop@uw.edu}
\ead[url]{http://www.researchgate.net/profile/Pietro\_Paparella}

\cortext[corpp]{Corresponding author.}

\address[addy1]{Department of Mathematics, Washington State University, Pullman, WA 99164-1113, U.S.A.}
\address[addy2]{Division of Engineering and Mathematics, University of Washington Bothell, Bothell, WA 98011-8246, U.S.A.}

\begin{abstract}
Using matrix function theory, Perron-Frobenius theory, combinatorial matrix theory, and elementary number theory, we characterize, classify, and describe in terms of the Jordan canonical form the matrix \emph{p}th-roots of imprimitive irreducible nonnegative matrices. Preliminary results concerning the matrix roots of reducible matrices are provided as well.
\end{abstract}

\begin{keyword}
matrix function \sep eventually nonnegative matrix \sep Jordan form \sep irreducible matrix \sep matrix root \sep Perron-Frobenius theorem

\MSC[2010] 15A16 \sep 15B48 \sep 15A21
\end{keyword}
\end{frontmatter}

\section{Introduction}

A real matrix $A$ is said to be \emph{eventually nonnegative} (\emph{positive}) if there exists a nonnegative integer $p$ such that $A^k$ is entrywise nonnegative (positive) for all $k \geq p$. If $p$ is the smallest such integer, then $p$ is called the \emph{power index of $A$} and is denoted by $p(A)$.

Eventual nonnegativity  has been the subject of study in several papers (see, e.g., \cite{cnm2002, cnm2004,f1978, jt2004, mpt2014, n2006, nt2009, zm2003, zt1999}) and it is well-known that the notions of eventual positivity and nonnegativity are associated with properties of the eigenspace corresponding to the spectral radius.

A real matrix $A$ is said to possess the \emph{Perron-Frobenius property} if its spectral radius is a positive eigenvalue corresponding to an entrywise nonnegative eigenvector. The \emph{strong Perron-Frobenius property} further requires that the spectral radius is simple; that it dominates in modulus every other eigenvalue of $A$; and that it has an entrywise positive eigenvector.

Several challenges regarding the theory and applications of eventually nonnegative matrices remain unresolved. For example, eventual positivity of $A$ is equivalent to $A$ and $A^\top$ possessing the strong Perron-Frobenius property, however, the Perron-Frobenius property for $A$ and $A^\top$ is a necessary but not sufficient condition for eventual nonnegativity of $A$.

A \emph{matrix pth-root} or \emph{matrix root} of a matrix $A$ is any matrix that satisfies the equation $X^p - A = 0$. An eventually nonnegative (positive) matrix with power index $p=p(A)$ is, by definition, a $p$th-root of the nonnegative (positive) matrix $A^p$. As a consequence, in order to gain more insight into the powers of an eventually nonnegative (positive) matrix, it is natural to examine the roots of matrices that possess the (strong) Perron-Frobenius property. In \cite{mpt2014}, the matrix-roots of eventually positive matrices were classified. In this research, we classify the matrix roots of irreducible imprimitive nonnegative matrices; in particular, the main results in Section 3 provide necessary and sufficient conditions for the existence of an eventually nonnegative matrix $p$th-root of such a matrix. In addition, our proofs demonstrate how to construct these roots given a Jordan canonical form of such a matrix. 

\section{Background}

Denote by $\ii$ the imaginary unit, i.e., $\ii := \sqrt{-1}$. When convenient, an indexed set of the form $\{ x_i, x_{i+1}, \dots, x_{i+j} \}$ is abbreviated to $\left\{ x_k \right\}_{k=i}^{i+j}$. 

For $h$ a positive integer greater than one, let  
\begin{align}
R(h) &:= \{ 0, 1, \dots, h - 1 \}														\nonumber 		\\
\omega =\omega_h &:= \exp{\left( 2 \pi \ii/h \right)} \in \bb{C}, 									\nonumber 		\\
\Omega_h &:= \set{\omega^k}{k}{0}{h-1} = \set{\omega_h^k}{k}{0}{h-1} \subseteq \bb{C}, 				\label{bigomegah}  
\end{align}
and
\begin{align}														
\nu_h := 
\left( 1, \omega, \dots, \omega^{h-1} \right) \in \bb{C}^h.										\label{omegastar} 
\end{align}
With $\omega$ as defined, it is well-known that for $\alpha$, $\beta \in \bb{Z}$,
\begin{align}
\alpha \equiv {\beta} \bmod{h} &\Longrightarrow \omega^\alpha = \omega^\beta  						\label{omalphbeta} 	
\end{align}

Denote by $\mat{n}{\bb{C}}$ ($\mat{n}{\bb{R}}$) the algebra of complex (respectively, real) $n \times n$ matrices. Given $A \in \mat{n}{\bb{C}}$, the \textit{spectrum} of $A$ is denoted by $\sig{A}$, the \emph{spectral radius} of $A$ is denoted by $\sr{A}$, and the \emph{peripheral spectrum}, denoted by $\peri{A}$, is the multi-set given by 
\begin{align*}
\peri{A} = \{ \lambda \in \sig{A} : |\lambda| = \sr{A} \}.
\end{align*} 

 The \emph{direct sum} of the matrices $A_1, \dots, A_k$, where $A_i \in \mat{n_i}{\bb{C}}$, denoted by $A_1 \oplus \dots \oplus A_k$, $\bigoplus_{i=1}^k A_i$, or $\diag{A_1,\dots,A_k}$, is the $n \times n$ matrix 
 \[ 
 \left[
 \begin{array}{ccc}
 A_1 &  & \multirow{2}{*}{\Large 0} \\
 \multirow{2}{*}{\Large 0} & \ddots &  \\
  &  & A_k
 \end{array}
 \right],
 \]
where $n = \sum_{i=1}^k n_i$.

For $\lambda \in \bb{C}$, $\jordan{n}{\lambda}$ denotes the $n \times n$ \emph{Jordan block} with eigenvalue $\lambda$. For $A \in \mat{n}{\bb{C}}$, denote by $J = \inv{Z} A Z = \bigoplus_{i=1}^t \jordan{n_i}{\lambda_i} = \bigoplus_{i=1}^t J_{n_i}$, where $\sum n_i = n$, a Jordan canonical form of $A$. Denote by $\lambda_1,\dots,\lambda_s$ the \textit{distinct} eigenvalues of $A$, and, for $i=1,\dots,s$, let $m_i$ denote the \textit{index} of $\lambda_i$, i.e., the size of the largest Jordan block associated with $\lambda_i$.
 
For $z = r \exp{\left( \ii \theta \right)} \in \bb{C}$, where $r >0$, and an integer $p >1$, let 
\begin{align*} 
z^{1/p} := r^{1/p} \exp{( \ii \theta/p )}, 
\end{align*}
and, for $j \in R(p)$, let
\begin{align}  
f_j (z) := z^{1/p} \exp{(\ii2 \pi j/ p)} = r^{1/p} \exp{\left( \ii \left( \theta + 2 \pi j \right) / p \right)}, 				\label{rtf} 
\end{align}
i.e., $f_j$ denotes the $(j+1)$st-branch of the $p$th-root function.

\subsection{Combinatorial Structure}

For notation and definitions concerning the \emph{combinatorial stucture of a matrix}, i.e., the location of the zero-nonzero entries of a matrix, we follow \cite{br1991} and \cite{h2009}; for further results concerning \emph{combinatorial matrix theory}, see \cite{br1991} and references therein.

A \emph{directed graph} (or simply \emph{digraph}) $\Gamma = (V,E)$ consists of a finite, nonempty set $V$ of \emph{vertices}, together with a set $E \subseteq V \times V$ of \emph{arcs}. For $A \in \mat{n}{\bb{C}}$, the \emph{directed graph} (or simply \emph{digraph}) of $A$, denoted by $\Gamma = \dg{A}$, has vertex set $V = \{ 1, \dots, n \}$ and arc set $E = \{ (i, j) \in V \times V : a_{ij} \neq 0\}$. If $R$, $C \subseteq \{1,\dots, n\}$, then $A[R|C]$ denotes the submatrix of $A$ whose rows and columns are indexed by $R$ and $C$, respectively. 

A digraph $\Gamma$ is \textit{strongly connected} or \textit{strong} if for any two distinct vertices $u$ and $v$ of $\Gamma$, there is a walk in $\Gamma$ from $u$ to $v$ (following \cite{br1991}, we consider every vertex of $V$ as strongly connected to itself). For a strongly connected digraph $\Gamma$, the \emph{index of imprimitivity} is the greatest common divisor of the lengths of the closed walks in $\Gamma$. A strong digraph is \emph{primitive} if its index of imprimitivity is one, otherwise it is \emph{imprimitive}. 

For $n \geq 2$, a matrix $A \in \mat{n}{\bb{C}}$,  is \emph{reducible} if there exists a permutation matrix $P$ such that
\begin{align*}
P^\top A P =
\begin{bmatrix}
A_{11} & A_{12} \\
0 & A_{22}
\end{bmatrix},
\end{align*}
where $A_{11}$ and $A_{22}$ are nonempty square matrices and $0$ is a rectangular zero block. If $A$ is not reducible, then A is called \emph{irreducible}. The connection between reducibility and the digraph of $A$ is as follows: $A$ is irreducible if and only if $\dg{A}$ is strongly connected\footnote{Following \cite{br1991}, vertices are strongly connected to themselves so we take this result to hold for all $n \in \bb{N}$.} (see, e.g., \cite[Theorem 3.2.1]{br1991} or \cite[Theorem 6.2.24]{hj1990}). 

For $h \geq 2$, a digraph $\Gamma = ( V, E )$ is \emph{cyclically $h$-partite} if there exists an ordered partition $\Pi = (\pi_1,\dots, \pi_h)$ of $V$ into $h$ nonempty subsets such that for each arc $(i, j) \in E$, there exists $\ell \in \{ 1, \dots, h \}$ such that $i \in \pi_\ell$ and $j \in \pi_{\ell+1}$ (where, for convenience, we take $V_{h + 1} := V_1$). For $h \geq 2$, a strong digraph $\Gamma$ is cyclically $h$-partite if and only if $h$ divides the index of imprimitivity (see, e.g., \cite[p. 70]{br1991}). A matrix $A \in \mat{n}{\bb{C}}$ is called \emph{h-cyclic} if $\dg{A}$ is cyclically $h$-partite and if $\dg{A}$ is cyclically $h$-partite with ordered partition $\Pi$, then $A$ is said to be \emph{h-cyclic with partition} $\Pi$ or that $\Pi$ \emph{describes} the $h$-cyclic structure of A. The ordered partition $\Pi = (\pi_1,\dots, \pi_h)$ is \emph{consecutive} if $\pi_1 = \{1,\dots, i_1\}$, $\pi_2 = \{i_1 + 1,\dots, i_2\},\dots, \pi_h = \{ i_{h - 1} + 1, \dots, n \}$. If $A$ is $h$-cyclic with consecutive ordered partition $\Pi$, then $A$ has the block form
\begin{align}
\begin{bmatrix} 
0 	& A_{12} 	& 0 		& \cdots 	& 0 		\\
0 	& 0 		& A_{23} 	& \ddots 	& \vdots	\\
\vdots & \vdots 	& \ddots 	& \ddots 	& 0		\\
0 	& 0		& \cdots	& 0 		& A_{(h-1)h}	\\
A_{h1} & 0 & 0 &\cdots & 0
\end{bmatrix}															\label{cyclic_form} 
\end{align}
where $A_{i,i+1} = A[\pi_i|\pi_{i+1}]$ (\cite[p. 71]{br1991}). For any $h$-cyclic matrix $A$, there exists a permutation matrix $P$ such that $P^\top AP$ is $h$-cyclic with consecutive ordered partition. The \emph{cyclic index} or \emph{index of cyclicity} of $A$ is the largest $h$ for which $A$ is $h$-cyclic.

An irreducible nonnegative matrix $A$ is \emph{primitive} if $\dg{A}$ is primitive, and the \emph{index of imprimitivity} of $A$ is the index of imprimitivity of $\dg{A}$.  If $A$ is irreducible and imprimitive with index of imprimitivity $h \geq 2$, then $h$ is the cyclic index of $A$, $\dg{A}$ is cyclically $h$-partite with ordered partition $\Pi = (\pi_1,\dots, \pi_h)$, and the sets $\pi_i$ are uniquely determined (up to cyclic permutation of the $\pi_i$) (see, for example, \cite[p. 70]{br1991}). Furthermore, $\dg{A^h}$ is the disjoint union of $h$ primitive digraphs on the sets of vertices $\pi_i$, $i = 1,\dots, h$ (see, e.g., \cite[\S 3.4]{br1991}).

Following \cite{h2009}, given an ordered partition $\Pi = \left( \pi_1,\dots,\pi_h \right)$ of $\{1, \dots, n\}$ into $h$ nonnempty subsets, the \emph{cyclic characteristic matrix}, denoted by  $\chi_\Pi$, is the $n \times n$ matrix whose $(i,j)$-entry is 1 if there exists $\ell \in \{1,\dots, h\}$ such that $i \in \pi_\ell$ and $j \in \pi_{\ell + 1}$, and 0 otherwise. For an ordered partition $\Pi = \left( \pi_1,\dots,\pi_h \right)$ of $\{1, \dots, n\}$ into $h$ nonnempty subsets, note that 
\begin{enumerate}[label=(\arabic*)]
\item $\chi_\Pi$ is $h$-cyclic and $\dg{\chi_\Pi}$ contains every arc $(i,j)$ for $i \in \pi_\ell$ and $j \in \pi_{\ell+1}$; and
\item $A \in \mat{n}{\bb{C}}$ is $h$-cyclic with ordered partition $\Pi$ if and only if $\dg{A} \subseteq \dg{\chi_\Pi}$.
\end{enumerate}

We recall the Perron-Frobenius Theorem for irreducible, imprimitive matrices.

\begin{thm}[see, e.g., \cite{bp1994} or \cite{hj1990}] \label{pftirr}
Let $A \in \mat{n}{\bb{R}}$, $n \geq 2$, and suppose that $A$ is irreducible, nonnegative, imprimitive and suppose that $h>1$ is the cyclic index of $A$. Then
\begin{enumerate}[label=(\alph*)]
\item $\rho >0$;
\item $\rho \in \sig{A}$;
\item there exists a positive vector $x$ such that $Ax = \rho x$; 
\item $\rho$ is an algebraically (and hence geometrically) simple eigenvalue of $A$; and
\item $\peri{A} = \left\{ \rho \exp{( \ii 2 \pi k/ h )} : k \in R(h) \right\}$.
\item $\omega^k \sig{A}=\sig{A}$ for $k \in R(h)$.
\end{enumerate}
\end{thm}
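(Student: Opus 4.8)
The plan is to obtain (a)--(d) from the classical Perron--Frobenius theorem for irreducible nonnegative matrices and to extract (e)--(f) from the cyclic block structure imposed by imprimitivity; throughout write $\rho := \sr{A}$. For (a)--(d): since $A$ is irreducible and nonnegative, $(I+A)^{n-1}$ is entrywise positive, and the usual Collatz--Wielandt/compactness argument produces a positive vector $x$ with $Ax = \rho x$ and $\rho > 0$, while irreducibility forces $\rho$ to be an algebraically (hence geometrically) simple eigenvalue. I would invoke this directly (e.g.\ from \cite{bp1994} or \cite{hj1990}); it yields (a)--(d) simultaneously, and I record for later use that the same conclusions hold for any irreducible primitive nonnegative matrix.

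For (f): after a permutation similarity (which does not alter the spectrum) we may assume $A$ has the block-cyclic form \eqref{cyclic_form} relative to a consecutive partition $\Pi = (\pi_1,\dots,\pi_h)$, with $n_i := |\pi_i|$. Put $D := I_{n_1} \oplus \omega I_{n_2} \oplus \cdots \oplus \omega^{h-1} I_{n_h}$. A block computation gives $\inv{D} A D = \omega A$: the $(i,i+1)$ block equals $\omega^{-(i-1)}\omega^{i} A_{i,i+1} = \omega A_{i,i+1}$ for $1 \le i \le h-1$, and the wrap-around $(h,1)$ block equals $\omega^{-(h-1)}\omega^{0} A_{h1} = \omega A_{h1}$, using $\omega^{h} = 1$. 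Hence $A$ is similar to $\omega A$, so $\sig{A} = \sig{\omega A} = \omega\,\sig{A}$, and applying $D^{-k} A D^{k} = \omega^{k} A$ (equivalently, iterating) gives $\omega^{k}\sig{A} = \sig{A}$ for all $k \in R(h)$, which is (f). Moreover, since multiplication by the nonzero scalar $\omega^{k}$ carries a Jordan block $\jordan{m}{\mu}$ to a matrix similar to $\jordan{m}{\omega^{k}\mu}$, this same similarity shows that $\mu$ and $\omega^{k}\mu$ have the same index and the same algebraic multiplicity as eigenvalues of $A$.

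For (e): combining (b), (d), and (f), each $\rho\omega^{k}$, $k \in R(h)$, is an eigenvalue of $A$ of modulus $\rho$, and by the last observation each is algebraically simple; thus $\{\rho\omega^{k} : k \in R(h)\} \subseteq \peri{A}$ with every listed eigenvalue simple. For the reverse inclusion I would use the structural fact recalled in the preceding subsection: $\dg{A^{h}}$ is the disjoint union of $h$ primitive digraphs on $\pi_1,\dots,\pi_h$, so after the same permutation $A^{h} = B_1 \oplus \cdots \oplus B_h$ with each $B_i$ nonnegative, irreducible, and primitive; moreover the $B_i$ are cyclic products of the blocks $A_{j,j+1}$ and hence (using that $XY$ and $YX$ have the same nonzero spectrum) share a common spectral radius, necessarily $\sr{A^{h}} = \rho^{h}$. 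If $\lambda \in \peri{A}$, then $\lambda^{h} \in \sig{A^{h}} = \bigcup_{i}\sig{B_i}$ with $|\lambda^{h}| = \rho^{h}$, so $\lambda^{h}$ is a peripheral eigenvalue of some $B_i$; primitivity of $B_i$ forces $\lambda^{h} = \rho^{h}$, i.e.\ $\lambda = \rho\omega^{k}$ for some $k \in R(h)$. Therefore $\peri{A} = \{\rho\omega^{k} : k \in R(h)\}$, which is (e).

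The only substantive inputs are two standard black boxes — the classical irreducible/primitive Perron--Frobenius theorems and the combinatorial decomposition of $\dg{A^{h}}$ into primitive digraphs — both available from the cited literature and the preceding subsection. Granting those, the delicate points are the block bookkeeping behind $\inv{D} A D = \omega A$ (in particular the wrap-around block) and the verification that the scalar-multiplication similarity preserves Jordan structure, so that simplicity of $\rho$ propagates to simplicity of each $\rho\omega^{k}$; I expect making the multiplicity count in (e) fully airtight to be the fussiest step.
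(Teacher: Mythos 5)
Your proposal is correct: parts (a)--(d) are indeed just the classical irreducible Perron--Frobenius theorem, the diagonal similarity $\inv{D}AD=\omega A$ (including the wrap-around block, which you handle correctly via $\omega^h=1$) gives (f) together with preservation of Jordan structure, and the decomposition of $A^h$ into primitive diagonal blocks of spectral radius $\rho^h$ closes the reverse inclusion in (e). The paper does not prove this theorem at all --- it is quoted as a standard result from \cite{bp1994} and \cite{hj1990} --- and your argument is essentially the standard proof found in those references, so there is nothing to reconcile.
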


\subsection{Matrix Functions}

For background material concerning matrix functions, we follow \cite{h2008}; for more results concerning matrix functions, see, e.g., \cite{h2008}, \cite[Chapter 6]{hj1994}, or \cite[Chapter 9]{lt1985}. Herein it is assumed that the complex matrix $A$ has $s \leq t$ distinct eigenvalues.

\begin{mydef} 
Let $f : \bb{C} \longrightarrow \bb{C}$ be a function and let $f^{(k)}$ denote the $k$th derivative of $f$. The function $f$ is said to be \textit{defined on the spectrum of $A$} if the values
\begin{align*}
\begin{array}{c c c}
f^{(k)}(\lambda_i), & k=0,\dots,m_i-1, & i=1,\dots,s,
\end{array}
\end{align*}
called \textit{the values of the function $f$ on the spectrum of $A$}, exist.  
\end{mydef}

\begin{mydef}[Matrix function via Jordan canonical form] 
If $f$ is defined on the spectrum of $A \in \mat{n}{\bb{C}}$, then
\begin{align*} 
f(A) := Z f(J) \inv{Z} = Z \left( \bigoplus_{i=1}^t f(J_{n_i}) \right) \inv{Z}, 
\end{align*}
where
\begin{align}
f(J_{n_i}) := 
\begin{bmatrix} 
f(\lambda_i) & f'(\lambda_i) & \dots   & \frac{f^{(n_i-1)}(\lambda_i)}{(n_i - 1)!} 	\\
	          & f(\lambda_i)  & \ddots & \vdots 						\\
	          &                       & \ddots & f'(\lambda_i)						\\
	          &  	  	   &             & f(\lambda_i)
\end{bmatrix}. 														\label{fjb} 
\end{align}
\end{mydef}

\begin{thm}[see, e.g., {{\cite[\S 1.9]{h2008}} or \cite[Theorem 2.2.5]{pp_d2013}}] \label{jordanformtheorem}
Let $f$ be defined on the spectrum of a nonsingular matrix $A \in \mat{n}{\bb{C}}$ and suppose that $f'(\lambda_i) \neq 0$ for $i=1,\dots,t$. If $J = \bigoplus_{i=1}^t J_{n_i} (\lambda_i) = \inv{Z} A Z$ is a Jordan canonical form of $A$, then 
\begin{align*}
J_f := \bigoplus_{i=1}^t J_{n_i} (f (\lambda_i)) 
\end{align*}
is a Jordan canonical form of $f(A)$.
\end{thm}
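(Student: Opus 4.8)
The plan is to reduce the statement to a single Jordan block, where it amounts to one rank computation, and then to reassemble. By the definition of a matrix function via the Jordan form, $f(A) = Z f(J) \inv{Z}$, so $f(A)$ is similar to $f(J) = \bigoplus_{i=1}^{t} f(\jordan{n_i}{\lambda_i})$. Since similarity is transitive and respects direct sums, it suffices to prove that for each $i$ the block $f(\jordan{n_i}{\lambda_i})$ is similar to the single Jordan block $\jordan{n_i}{f(\lambda_i)}$: if $S_i$ realizes this similarity, then $S := \bigoplus_{i=1}^{t} S_i$ gives $f(J) = S J_f \inv{S}$ and hence $f(A) = (ZS) J_f \inv{(ZS)}$, so that $J_f$ --- being a direct sum of Jordan blocks --- is a Jordan canonical form of $f(A)$ (and, by uniqueness of the Jordan form, \emph{the} Jordan canonical form up to a reordering of the blocks).

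For the per-block claim, fix $i$, put $\mu := f(\lambda_i)$ and $n := n_i$, and consider $T := f(\jordan{n}{\lambda_i})$. By \eqref{fjb}, $T$ is an upper triangular Toeplitz matrix with every diagonal entry equal to $\mu$, so $\sig{T} = \{ \mu \}$, and $N := T - \mu I$ is strictly upper triangular Toeplitz with first-superdiagonal entries all equal to $f'(\lambda_i)$. This is exactly where the hypothesis $f'(\lambda_i) \neq 0$ enters: back-substitution in $Nx = 0$, reading the rows of $N$ from the bottom up, forces $x_n = x_{n-1} = \dots = x_2 = 0$ with $x_1$ free, so $N$ has a one-dimensional null space and hence $\rk{T - \mu I} = n - 1$. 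Thus $\mu$ is the only eigenvalue of $T$ and its geometric multiplicity is $1$, so $T$ has a unique Jordan block; since $T$ is $n \times n$, that block has size $n$, i.e., $T$ is similar to $\jordan{n}{\mu} = \jordan{n_i}{f(\lambda_i)}$, as required.

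The only step that is not pure bookkeeping is this last one: recognizing that the nonvanishing of the \emph{single} derivative $f'(\lambda_i)$ already keeps $N$'s null space one-dimensional, so that no Jordan block of $A$ ``splits'' when $f$ is applied (the remaining values $f^{(k)}(\lambda_i)$, $k \geq 2$, are irrelevant to the block structure). Everything else is transitivity of similarity and the behaviour of direct sums. I would also remark that nonsingularity of $A$ plays no role in this argument; it is imposed because in the intended applications $f$ will be a branch $f_j$ of the $p$th-root function from \eqref{rtf}, which requires $0 \notin \sig{A}$ in order to be defined and differentiable on the spectrum of $A$.
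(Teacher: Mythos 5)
Your proof is correct, and since the paper states this result as background without proof (citing Higham and the second author's thesis), there is nothing in the text to diverge from: your argument --- reduce to a single block via $f(A) = Z f(J) \inv{Z}$, then use $f'(\lambda_i) \neq 0$ to see that $f(\jordan{n_i}{\lambda_i}) - f(\lambda_i)I$ has rank $n_i - 1$, so no block splits --- is precisely the standard proof found in the cited sources. Your closing remark is also accurate: nonsingularity of $A$ is not needed for the statement itself, only for the intended application to the branches $f_j$ of the $p$th-root function in \eqref{rtf}.
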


\begin{thm}[{\cite[Theorems 2.1 and 2.2]{s2003}}] \label{thm_class_rts} 
If $A \in \mat{n}{\bb{C}}$ is nonsingular, then $A$ has precisely $p^s$ $p$th-roots that are expressible as polynomials in $A$, given by 
\begin{align}
X_j = Z \left( \bigoplus_{i=1}^t f_{j_i} (J_{n_i}) \right) \inv{Z}, 							\label{prim_roots}	
\end{align}
where $j = \begin{pmatrix} j_1, \dots, j_t \end{pmatrix}$, $j_i \in \{0,1,\dots,p-1\}$, and $j_i = j_k$ whenever $\lambda_i = \lambda_k$. 

If $s < t$, then $A$ has additional $p$th-roots that form parameterized families 
\begin{align}
X_j (U) = Z U \left( \bigoplus_{i=1}^t f_{j_i} (J_{n_i}) \right) \inv{U} \inv{Z}, 					\label{nnprim_roots} 
\end{align}
where $U$ is an arbitrary nonsingular matrix that commutes with $J$ and, for each $j$, there exist $i$ and $k$, depending on $j$, such that $\lambda_i = \lambda_k$, while $j_i \neq j_k$.
\end{thm}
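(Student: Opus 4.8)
The plan is to split the statement into the $p^s$ polynomial (``primary'') roots and the non-primary families, using Theorem~\ref{jordanformtheorem} as the main lever for the second part.

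\textbf{Primary roots.} First I would note that, since $A$ is nonsingular, every $\lambda_i \neq 0$ and each branch $f_j$ from \eqref{rtf} is analytic at $\lambda_i$; a tuple $j = (j_1,\dots,j_t)$ with $j_i \in R(p)$ and $j_i = j_k$ whenever $\lambda_i = \lambda_k$ assigns a single consistent value $f_{j_i}^{(k)}(\lambda_i)$ to each spectral datum of $A$, hence defines a primary matrix function $g_j$ with $g_j(A) = Z\big(\bigoplus_i f_{j_i}(J_{n_i})\big)\inv{Z} = X_j$, and as a primary matrix function $X_j$ is a polynomial in $A$. Because $f_{j_i}(z)^p \equiv z$ near $\lambda_i$, the composition rule for matrix functions gives $f_{j_i}(J_{n_i})^p = J_{n_i}$, so $X_j^p = A$. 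There are exactly $p^s$ admissible tuples (one free choice in $R(p)$ per distinct eigenvalue), and distinct admissible tuples give distinct roots (read off the eigenvalue of $X_j$ on each generalized eigenspace of $A$). To see these exhaust the polynomial roots, take $X = q(A)$, write $X = Z\big(\bigoplus_i q(J_{n_i})\big)\inv{Z}$, and use $X^p = A$ to get $q(J_{n_i}(\lambda_i))^p = J_{n_i}(\lambda_i)$; comparing diagonals gives $q(\lambda_i)^p = \lambda_i$, so $q(\lambda_i) = f_{j_i}(\lambda_i)$ for some $j_i \in R(p)$, and then implicitly differentiating $q(z)^p = z$ up to order $n_i - 1$ (forced since $q(z)^p - z$ must vanish to that order at $\lambda_i$) shows $q^{(k)}(\lambda_i) = f_{j_i}^{(k)}(\lambda_i)$, i.e.\ $q(J_{n_i}) = f_{j_i}(J_{n_i})$; consistency of $q$ on $\sig{A}$ forces $j_i = j_k$ when $\lambda_i = \lambda_k$, so $X = X_j$.

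\textbf{General roots and the parameter $U$.} For an arbitrary $p$th root $X$ I would first observe $XA = XX^p = X^pX = AX$, so $X$ preserves each generalized eigenspace of $A$; writing $X = ZW\inv{Z}$, this says $W$ commutes with $J$, hence $W = \bigoplus_{r=1}^s W^{(r)}$ where $W^{(r)}$ acts on $J^{(r)} := \bigoplus_{i:\,\lambda_i = \mu_r} J_{n_i}$ (the blocks at the $r$th distinct eigenvalue $\mu_r$), and $X^p = A \iff (W^{(r)})^p = J^{(r)}$ for every $r$. So the problem reduces to classifying the $p$th roots $W$ of $B := \bigoplus_{l=1}^{t_r} J_{m_l}(\mu)$ for a single nonzero eigenvalue $\mu = \mu_r$. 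Here I would invoke Theorem~\ref{jordanformtheorem} with $f(z) = z^p$ --- legitimate since $W$ is nonsingular, so $f'(z) = pz^{p-1}$ is nonzero on $\sig{W}$ --- to conclude that $W$ has Jordan form $\bigoplus_l J_{m_l}(\nu_l)$ with each $\nu_l$ a $p$th root of $\mu$, say $\nu_l = f_{j_l}(\mu)$: the block sizes are unchanged because $z \mapsto z^p$ has nonzero derivative there, while the eigenvalues are $p$th-powered back to $\mu$.

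\textbf{Conjugating into the commutant.} Finally, set $Y := \bigoplus_l f_{j_l}(J_{m_l}(\mu))$; by Theorem~\ref{jordanformtheorem} once more, $Y$ has the same Jordan form $\bigoplus_l J_{m_l}(\nu_l)$ as $W$, so $W = SY\inv{S}$ for some nonsingular $S$. The decisive point --- which I expect to be the crux --- is that $S$ necessarily commutes with $B$: since $Y^p = \bigoplus_l f_{j_l}(J_{m_l}(\mu))^p = \bigoplus_l J_{m_l}(\mu) = B$ and also $W^p = B$, we get $SB\inv{S} = SY^p\inv{S} = W^p = B$. Reassembling over $r$, $U := \bigoplus_r S^{(r)}$ commutes with $J$ and $W = U\big(\bigoplus_i f_{j_i}(J_{n_i})\big)\inv{U}$, hence $X = X_j(U)$. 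If $j_i = j_k$ whenever $\lambda_i = \lambda_k$, then $\bigoplus_i f_{j_i}(J_{n_i})$ is itself a function of $J$, commutes with $U$, and $X = X_j$ is primary; otherwise some repeated eigenvalue carries two distinct branches, which is possible exactly when $s < t$, and then $X$ is not a polynomial in $A$ by the first part --- so it is genuinely new --- and letting $U$ range over the commutant of $J$ produces the stated parameterized families. Apart from this conjugation argument, the remaining work is routine: the matrix-function identities $f_j(z)^p \equiv z$, the counting of tuples, and the Hermite-interpolation bookkeeping.
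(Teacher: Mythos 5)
This theorem is not proved in the paper at all: it is stated with a citation to Smith's work (\cite[Theorems 2.1 and 2.2]{s2003}), so there is no in-paper argument to compare against; your proposal should be judged as a self-contained reconstruction, and as such it is correct and follows essentially the standard proof from the cited literature. The skeleton is right: consistency of the branch data makes each $X_j$ a primary matrix function and hence a polynomial in $A$ with $X_j^p = A$; the Hermite-interpolation/implicit-differentiation argument shows conversely that any polynomial root must agree with some branch $f_{j_i}$ on the spectral data of each $\lambda_i$, giving exactly $p^s$ such roots; any root $X$ commutes with $A$, so $W = \inv{Z}XZ$ commutes with $J$ and splits blockwise over the distinct eigenvalues; \hyp{jordanformtheorem}{Theorem} applied to $W$ with $f(z)=z^p$ (legitimate since $W$ is nonsingular), together with uniqueness of the Jordan form of $B=\bigoplus_l \jordan{m_l}{\mu}$, pins down the Jordan structure of $W$; and your identification of the crux is exactly right --- the similarity $S$ with $W = SY\inv{S}$ automatically lies in the commutant of $B$ because $SB\inv{S} = SY^p\inv{S} = (SY\inv{S})^p = W^p = B$, which is what lets you write $X = X_j(U)$ with $U$ commuting with $J$. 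The only place I would tighten the wording is the closing claim that an inconsistent-branch root ``is not a polynomial in $A$ by the first part'': the cleanest justification is that such an $X_j(U)$ has Jordan form $\bigoplus_i \jordan{n_i}{f_{j_i}(\lambda_i)}$ and therefore carries two distinct $p$th roots of some $\mu_r$ in its spectrum on the generalized eigenspace of $A$ for $\mu_r$, whereas every primary root acts there with a single eigenvalue; this both shows the nonprimary families are genuinely new and that they arise only when $s<t$. With that small clarification your argument is complete and matches the approach of the source the paper cites.
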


In the theory of matrix functions, the roots given by \eqref{prim_roots} are called the {\it primary roots} of $A$, and the roots given by \eqref{nnprim_roots}, which exist only if $A$ is {\it derogatory} (i.e., some eigenvalue appears in more than one Jordan block), are called the {\it nonprimary roots} \cite[Chapter 1]{h2008}.

\subsection{Other results}

\begin{thm}[{\cite[Theorem 13]{p2013}}]\label{omegahfracpwr} 
Let $f_k$ be defined as in \eqref{rtf}. If
\begin{align*}
\mathcal{B} := 
\left\{ j = \left( j_0, j_1, \dots, j_{h-1} \right) : j_k \in R(p), \forall k \in R(h) \right\}, 
\end{align*}
and 
\begin{align*}
(\Omega_h)_j^{1/p} := \set{f_{j_k} (\omega^k)}{k}{0}{h-1}, 
~ j \in \mathcal{B}, 
\end{align*}
then there exists a unique $j \in \mathcal{B}$ such that $(\Omega_h)_j^{1/p} = \Omega_h$ if and only if $\gcd{(h,p)}=1$.
\end{thm}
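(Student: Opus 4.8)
The plan is to convert the set equality $(\Omega_h)_j^{1/p} = \Omega_h$ into an elementary system of congruences in the unknowns $j_0,\dots,j_{h-1}$. For $k \in R(h)$, write $\omega^k = \exp(2\pi\ii k/h)$; then, by \eqref{rtf},
\begin{align*}
f_{j_k}(\omega^k) = \exp\!\left( \ii\,\frac{2\pi k/h + 2\pi j_k}{p} \right) = \exp\!\left( 2\pi\ii\,\frac{k + h j_k}{hp} \right),
\end{align*}
an $(hp)$th root of unity. Now an $(hp)$th root of unity $\exp(2\pi\ii\, m/(hp))$ with $m \in R(hp)$ lies in $\Omega_h$ if and only if $p \mid m$, in which case it equals $\omega^{m/p}$; since $0 \le k + h j_k \le hp-1$, applying this with $m = k + h j_k$ shows that $f_{j_k}(\omega^k) \in \Omega_h$ if and only if $p \mid k + h j_k$, equivalently $h j_k \equiv -k \imod{p}$. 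In particular this congruence, for every $k \in R(h)$, is \emph{necessary} for $(\Omega_h)_j^{1/p} = \Omega_h$.

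For the forward implication I would argue contrapositively: if $d := \gcd(h,p) > 1$ then, using $h \geq 2$, take $k = 1 \in R(h)$; the congruence $h j_1 \equiv -1 \imod{p}$ is unsolvable since $d \nmid 1$. Hence no $j \in \mathcal{B}$ satisfies $(\Omega_h)_j^{1/p} = \Omega_h$, and a fortiori no unique such $j$ exists.

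For the reverse implication, assume $\gcd(h,p) = 1$. Then $h$ is a unit modulo $p$, so each congruence $h j_k \equiv -k \imod{p}$ has a unique solution $j_k \in R(p)$; this determines exactly one candidate tuple $j = (j_0,\dots,j_{h-1}) \in \mathcal{B}$, and by the necessity observation it is the only tuple that could possibly work. To see that it does, I would show that $k \mapsto f_{j_k}(\omega^k)$ is a bijection of $R(h)$ onto $\Omega_h$. Set $m_k := k + h j_k \in R(hp)$. The division algorithm (mixed-radix representation in base $h$) makes $(a,b) \mapsto a + h b$ a bijection of $R(h) \times R(p)$ onto $R(hp)$, so $\{ m_k : k \in R(h) \}$ is a set of $h$ distinct elements of $R(hp)$, each divisible by $p$ by construction; it therefore equals $\{ 0, p, 2p, \dots, (h-1)p \}$, and dividing by $p$ gives $\{ m_k/p : k \in R(h) \} = R(h)$. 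Consequently $(\Omega_h)_j^{1/p} = \{ \omega^{m_k/p} : k \in R(h) \} = \Omega_h$, which completes the argument.

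I expect the last paragraph to be the main obstacle: the congruences $h j_k \equiv -k \imod{p}$ are forced immediately and, under coprimality, uniquely solvable, but one must still verify that the resulting $p$th roots are pairwise distinct and collectively exhaust $\Omega_h$ — this is precisely the counting argument on residues modulo $hp$. The forward direction, by contrast, reduces to the one-line observation that $h j_1 \equiv -1 \imod{p}$ has no solution when $\gcd(h,p) > 1$.
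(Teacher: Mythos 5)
Your proof is correct, and the verification of both directions (the unsolvability of $h j_1 \equiv -1 \imod{p}$ when $\gcd(h,p)>1$, and the counting of the distinct multiples of $p$ among the $m_k = k + h j_k \in R(hp)$ when $\gcd(h,p)=1$) is sound and complete. The paper itself does not prove this statement—it is quoted from \cite{p2013}—but your reduction to the identity $f_{j_k}(\omega^k) = \omega^{(k+hj_k)/p}$ together with the bound $0 \leq (k+hj_k)/p \leq h-1$ is exactly the bookkeeping the paper records in \hyp{remarkrootomega}{Remark}, so your argument follows essentially the same route as the cited source.
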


\begin{rem} \label{remarkrootomega} 
For every $k \in R(h)$ and $j = (j_0,j_1,\dots,j_{h-1}) \in \mathcal{B}$, it is easy to verify that $f_{j_k} ( \omega^k ) = \omega^{(k + h j_k)/p}$,
whence it follows that
\begin{align*}
(\Omega_h)_j^{1/p} 
= \set{\omega^{(k + h j_k)/p}}{k}{0}{h-1}.
\end{align*}

In \cite{p2013} it is shown that if $j$ is the unique $h$-tuple as specified in \hyp{omegahfracpwr}{Theorem}, then the set  
\begin{align*} 
\mathcal{E}_j := \set{\frac{k + h j_k}{p}}{k}{0}{h-1} 
\end{align*}
is integral and a \emph{complete residue system modulo h}, i.e., the map $\phi: \mathcal{E}_j \longrightarrow R(h)$, where 
\[ (k + hj_k)/p \longrightarrow ((k + hj_k)/p) \bmod{h}, \] 
is bijective. Moreover, because $0 \leq (k + h j_k)/p \leq h-1$ (the lower-bound is trivial and the upper bound follows because for any $k$,
\begin{align*}
\frac{k + h j_k}{p} \leq \frac{(h-1) + h(p-1)}{p} = \frac{hp-1}{p} = h - \frac{1}{p} < h)
\end{align*}
it follows that 
\begin{align} 
\mathcal{E}_j = R(h).															\label{fracequalsRh} 
\end{align}

If $( \nu_h )_j^{1/p} := ( f_{j_0}(1), f_{j_1}(\omega), \dots, f_{j_{h-1}}(\omega^{h-1}))$, then, following \eqref{fracequalsRh}, $\left( \nu_h \right)_j^{1/p}$ corresponds to a permutation of $\Omega_h$. 

Because $\gcd{(h,p)}=1$, it follows that $((k + h k j_1)/p) \equiv ((k + h j_k)/p) \bmod{h}$, which, along with \eqref{omalphbeta}, yields 
\begin{align*}
(\omega^{(1 + h j_1)/p})^k = \omega^{(k + h k j_1)/p} = \omega^{(k + h j_k)/p},~k \in \{ 2, \dots, h-1 \}.		
\end{align*}
Hence
\begin{align*}
\left( \nu_h \right)_j^{1/p} 
&= ( 1, \omega^{(1 + h j_1)/p}, \dots, \omega^{((h-1) + hj_{h-1})/p})			\\
&= ( 1, ( \omega^{(1 + h j_1)/p})^1, \dots, ( \omega^{(1 + h j_1)/p})^{h-1}).
\end{align*}
Note that
\begin{align*}
\left( \nu_h \right)^q
:= ( 1, \omega^q, \dots, \omega^{q(h-1)} )
= ( 1, ( \omega^q )^1, \dots, ( \omega^q )^{h-1} ) 	
\end{align*}
is a permutation of the elements in $\Omega_h$ if and only if $\gcd{(h,q)} = 1$ (\cite[Corollary 7]{p2013}); following \eqref{omalphbeta}, for every $h$ there are $\varphi(h)$ such permutations, where $\varphi$ denotes {Euler's \emph{totient} function}. Thus, if $\gcd(h,p) = 1$, then there exists $q \in \bb{N}$, $\gcd{(h,q)} = 1$, such that
\begin{align}
\left( \nu_h \right)_j^{1/p} = \left( \nu_h \right)^q.								\label{fracequalspwr} 
\end{align} 
\end{rem}

Next, we state results concerning the structure of the Jordan chains of $h$-cyclic matrices. It is assumed that $A \in \mat{n}{\bb{C}}$ is nonsingular, $h$-cyclic with ordered-partition $\Pi$, and has the form \eqref{cyclic_form}.

\begin{cor} [{\cite[Corollary 3.2]{mp2014}}] \label{jblockcor}
If $\jordan{r}{\lambda}$ is a Jordan block of $J$, then $\jordan{r}{\lambda \omega^k}$ is a Jordan block of $J$ for $k \in R(h)$.
\end{cor}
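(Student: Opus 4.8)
The plan is to use the well-known fact that an $h$-cyclic matrix of the form \eqref{cyclic_form} is similar to each of its rotations $\omega^k A$ through an explicit block-diagonal similarity, and then to push this similarity through to the Jordan canonical form. First I would write $n_i := |\pi_i|$ for $i = 1,\dots,h$ and, for a fixed $k \in R(h)$, introduce the nonsingular block-diagonal matrix
\[
D_k := \Diag{I_{n_1}, \omega^k I_{n_2}, \dots, \omega^{(h-1)k} I_{n_h}}.
\]
A block-by-block computation using \eqref{cyclic_form} shows that conjugation of the block $A_{i,i+1}$ contributes the scalar $\omega^{(i-1)k}\omega^{-ik} = \omega^{-k}$, while the wrap-around block $A_{h1}$ contributes $\omega^{(h-1)k} = \omega^{-k}$ (using $\omega^{hk} = 1$). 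Hence $D_k A \inv{D_k} = \omega^{-k} A$, equivalently $\inv{D_k} A D_k = \omega^k A$, so $A$ and $\omega^k A$ are similar for every $k \in R(h)$.

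Next I would translate this into a statement about Jordan blocks. Write $J = \inv{Z} A Z = \bigoplus_{i=1}^t \jordan{n_i}{\lambda_i}$; then $\omega^k J = \inv{Z}(\omega^k A)Z$. The matrix $\omega^k \jordan{n_i}{\lambda_i}$ has $\omega^k \lambda_i$ along its diagonal and $\omega^k$ along its super-diagonal, and conjugating it by $\diag{1, \omega^{-k}, \omega^{-2k}, \dots, \omega^{-(n_i-1)k}}$ restores $1$'s on the super-diagonal; therefore $\omega^k J$ is similar to $\bigoplus_{i=1}^t \jordan{n_i}{\omega^k \lambda_i}$. Chaining the similarities $\bigoplus_{i=1}^t \jordan{n_i}{\omega^k \lambda_i} \sim \omega^k J \sim \omega^k A \sim A$, we conclude that $\bigoplus_{i=1}^t \jordan{n_i}{\omega^k \lambda_i}$ is a Jordan canonical form of $A$.

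Finally I would invoke the uniqueness of the Jordan canonical form up to permutation of its blocks: since $J$ and $\bigoplus_{i=1}^t \jordan{n_i}{\omega^k \lambda_i}$ are both Jordan forms of $A$, they share the same multiset of Jordan blocks. Consequently $\jordan{r}{\lambda}$ is a block of $J$ if and only if it occurs in $\bigoplus_{i=1}^t \jordan{n_i}{\omega^k \lambda_i}$, that is, if and only if $\jordan{r}{\omega^{-k}\lambda}$ is a block of $J$. As $k$ runs over $R(h)$ the scalar $\omega^{-k} = \omega^{h-k}$ runs over all of $\Omega_h$, so $\jordan{r}{\lambda}$ is a block of $J$ if and only if $\jordan{r}{\omega^k \lambda}$ is a block of $J$ for every $k \in R(h)$, which is the claim.

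I expect the only delicate point to be the bookkeeping in the identity $D_k A \inv{D_k} = \omega^{-k} A$ --- in particular the wrap-around term $A_{h1}$ and the use of $\omega^{hk} = 1$ --- together with making sure the final quantifier over $k$ is applied symmetrically, since the similarity computation naturally produces the factor $\omega^{-k}$ rather than $\omega^{k}$.
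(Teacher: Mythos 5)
Your argument is correct. Note that the paper does not prove this statement itself---it imports it from \cite[Corollary 3.2]{mp2014}---so there is no in-paper proof to match; but your route is the standard one and is essentially equivalent, at the level of whole-matrix similarity, to what the cited reference does at the level of Jordan chains (there one takes a Jordan chain for $\lambda$, partitions each chain vector conformably with $\Pi$, and rescales the pieces by powers of $\omega$ to produce a chain for $\lambda\omega^k$; your $D_k = \Diag{I_{n_1}, \omega^k I_{n_2}, \dots, \omega^{(h-1)k} I_{n_h}}$ is exactly that rescaling packaged as the similarity $\inv{D_k} A D_k = \omega^k A$). The subsequent steps---rescaling $\omega^k \jordan{n_i}{\lambda_i}$ by a diagonal similarity to get $\jordan{n_i}{\omega^k \lambda_i}$, invoking uniqueness of the Jordan form, and observing that $\omega^{-k}=\omega^{h-k}$ sweeps out $\Omega_h$ so the sign of the exponent is immaterial---are all sound. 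Two cosmetic points: you reuse $n_i$ both for $|\pi_i|$ and for the Jordan block sizes, which should be disentangled notationally; and the direction of the diagonal conjugation that restores the $1$'s on the superdiagonal is the opposite of the one you wrote (with $S=\diag{1,\omega^{-k},\dots,\omega^{-(r-1)k}}$ one needs $\inv{S} \left( \omega^k \jordan{r}{\lambda} \right) S$), though since either direction is a similarity this does not affect the conclusion.
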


\begin{rem} 
With $\nu_h$ as defined in \eqref{omegastar} and
\begin{align}
J \left( \lambda \nu_h, r \right) := 
\begin{bmatrix}
\jordan{r}{\lambda} 				\\
& \jordan{r}{\lambda \omega} 			\\
& & \ddots 						\\
& & & \jordan{r}{\lambda \omega^{h-1}}
\end{bmatrix} \in \mat{rh}{\bb{C}},										\label{cyclicjordanblocks} 
\end{align}
it follows that a Jordan form of a nonsingular $h$-cylic matrix $A$ has the form
\begin{align*}
\inv{Z} A Z = J = \bigoplus_{i=1}^{t'} J\left( \lambda_i \nu_h, r_i \right),~t'|t.
\end{align*}
\end{rem}

\begin{lem}[{\cite[Theorem 3.6]{mp2014}}] \label{cycliccommutator}
For $i=1,\dots,t'$, if 
\begin{align}
A_{\lambda_i} :=
Z
\diag{0, \dots, 0, \overbrace{J\left( \lambda_i \nu_h, r_i \right)}^i,0,\dots,0}
\inv{Z} \in \mat{n}{\bb{C}},											\label{cycliccommutatormatrix} 
\end{align}
then $\dg{A_{\lambda_i}} \subseteq \dg{\chi_\Pi}$, $A_{\lambda_i}$ commutes with $A$, and $A_{\lambda_i} A_{\lambda_j} = A_{\lambda_j} A_{\lambda_i} = 0$ for $i \neq j$, $j=1,\dots,t'$.
\end{lem}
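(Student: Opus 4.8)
The plan is to separate the three assertions into an easy part — the commutation relations — and the real content, $\dg{A_{\lambda_i}} \subseteq \dg{\chi_\Pi}$. Write $J = \bigoplus_{k=1}^{t'} J(\lambda_k \nu_h, r_k)$ and, for each $i$, let $D_i$ denote $J$ with every block other than the $i$th replaced by zero, so that $A_{\lambda_i} = Z D_i \inv{Z}$. Because $D_i$ and $J$ are block-diagonal with respect to the same partition and $D_i$ agrees with $J$ on its one nonzero block, $D_i J = J D_i$, hence $A_{\lambda_i} A = Z D_i J \inv{Z} = Z J D_i \inv{Z} = A A_{\lambda_i}$; and for $i \neq j$ the supports of $D_i$ and $D_j$ are disjoint, so $D_i D_j = D_j D_i = 0$ and $A_{\lambda_i} A_{\lambda_j} = Z D_i D_j \inv{Z} = 0 = A_{\lambda_j} A_{\lambda_i}$. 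Those two identities are routine.

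For $\dg{A_{\lambda_i}} \subseteq \dg{\chi_\Pi}$ I would introduce a single diagonal matrix that detects the cyclic structure: let $D$ be diagonal with $(j,j)$-entry $\omega^{\ell-1}$ whenever $j \in \pi_\ell$. Then $D^h = I_n$, and since $A$ is $h$-cyclic with partition $\Pi$, any nonzero $a_{jk}$ has $j \in \pi_\ell$ and $k \in \pi_{\ell+1}$ for some $\ell$, so $\inv{D} A D = \omega A$. Conversely, if $B \in \mat{n}{\bb{C}}$ satisfies $\inv{D} B D = \omega B$, then each nonzero $b_{jk}$ with $j \in \pi_\ell$, $k \in \pi_m$ forces $\omega^{m-\ell} = \omega$, i.e.\ $m \equiv \ell+1 \imod{h}$, whence $\dg{B} \subseteq \dg{\chi_\Pi}$. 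It therefore suffices to prove $\inv{D} A_{\lambda_i} D = \omega A_{\lambda_i}$.

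To get this identity I would track the action of $D$ on a Jordan basis. From $\inv{D} A D = \omega A$ one obtains $A \inv{D} = \omega^{-1} \inv{D} A$, from which a short computation (invoking $D^h = I_n$) shows that $\inv{D}$ carries a Jordan chain of $A$ at $\lambda_i\omega^k$, after a fixed invertible diagonal rescaling, to a Jordan chain at $\lambda_i\omega^{k-1}$. Choosing, within each cyclic block $J(\lambda_i\nu_h, r_i)$, one Jordan chain at $\lambda_i$ and generating the remaining $h-1$ by iterating $\inv{D}$ — permissible by \hyp{jblockcor}{Corollary} and the remark following \eqref{cyclicjordanblocks} — yields $\inv{D} Z = Z M$ with $M = \bigoplus_i M_i$ block-diagonal conformal to the cyclic blocks and each $M_i$ invertible. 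Then $\inv{D} A D = \omega A$ rewrites as $M J \inv{M} = \omega J$; restricting to the $i$th cyclic block gives $M_i J(\lambda_i\nu_h, r_i) \inv{M_i} = \omega J(\lambda_i\nu_h, r_i)$, equivalently $M D_i \inv{M} = \omega D_i$, whence $\inv{D} A_{\lambda_i} D = (\inv{D} Z) D_i (\inv{D} Z)^{-1} = Z M D_i \inv{M} \inv{Z} = \omega Z D_i \inv{Z} = \omega A_{\lambda_i}$, as required. (A slicker variant when the cyclic blocks carry pairwise distinct $\omega$-orbits: $A_{\lambda_i} = p_i(A)$ for the Hermite interpolant $p_i$ equal to $z \mapsto z$ near $\{\lambda_i\omega^k\}_k$ and to $0$ near the rest of $\sig{A}$; since $\sig{A}$ is $\omega$-invariant by \hyp{jblockcor}{Corollary}, $p_i(\omega z) = \omega p_i(z)$ on $\sig{A}$, so $\inv{D} A_{\lambda_i} D = p_i(\inv{D} A D) = p_i(\omega A) = \omega A_{\lambda_i}$.)

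The main obstacle is the structural step in the previous paragraph: verifying that $\inv{D}$ respects the cyclic-block decomposition of $Z$ — i.e.\ that $M$ is block-diagonal — and that the accompanying diagonal rescalings close up consistently around each $\omega$-orbit (this is where $D^h = I_n$ and \hyp{jblockcor}{Corollary} do their work). A lesser point to settle is that the asserted conclusions do not depend on the particular Jordan basis $Z$: two bases realizing $J$ differ by a matrix commuting with $J$, the commutation and annihilation identities hold for any such choice from the block structure alone, and for the digraph inclusion it suffices to make the $\inv{D}$-adapted choice above. Everything else is bookkeeping.
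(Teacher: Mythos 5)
The paper itself gives no proof of this lemma---it is imported verbatim from \cite[Theorem 3.6]{mp2014}---so your proposal has to stand on its own, and most of it does. The commutation and annihilation identities are indeed routine, and your reduction of the digraph claim to the single identity $\inv{D}A_{\lambda_i}D=\omega A_{\lambda_i}$, with $D$ the diagonal matrix whose $(j,j)$ entry is $\omega^{\ell-1}$ for $j\in\pi_\ell$, is exactly the right mechanism (it is also the machinery the cited source runs on). Your parenthetical polynomial argument is a complete, correct, and basis-independent proof whenever the orbits $\lambda_i\Omega_h$, $i=1,\dots,t'$, are pairwise disjoint. The structural step you flag as ``the main obstacle'' does close, and more easily than your $M$-bookkeeping suggests: if $(A-\mu I)v_j=v_{j-1}$, then from $\inv{D}A=\omega A\inv{D}$ the vectors $w_j:=\omega^{j}\inv{D}v_j$ satisfy $(A-\omega^{-1}\mu I)w_j=w_{j-1}$, and since $D^h=I_n$ and $\omega^{jh}=1$, the $h$th iterate of this chain map is the identity on every chain vector. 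Hence choosing, for each cyclic block $i$, one chain at $\lambda_i$ and pushing it around the orbit yields a subspace $V_i$ that is invariant under both $A$ and $D$ and carries exactly $J\left(\lambda_i\nu_h,r_i\right)$; no wrap-around consistency condition remains to be checked. For the resulting $Z$, $A_{\lambda_i}$ agrees with $A$ on $V_i$ and vanishes on the complementary invariant subspaces, so $\inv{D}A_{\lambda_i}Dv=\inv{D}ADv=\omega A_{\lambda_i}v$ on $V_i$ and $=0$ elsewhere, and the matrix $M$ never needs to be formed.

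The genuine gap is your closing claim that the conclusions do not depend on $Z$, with the digraph inclusion ``settled by making the adapted choice.'' When two cyclic blocks share the same orbit, $A_{\lambda_i}$ depends on $Z$, and the inclusion $\dg{A_{\lambda_i}}\subseteq\dg{\chi_\Pi}$ can fail for a perfectly legitimate Jordan basis. Take $h=2$ and $A=\bigl[\begin{smallmatrix}0&I_2\\I_2&0\end{smallmatrix}\bigr]$, so $J=J(\nu_2,1)\oplus J(\nu_2,1)$ with $\lambda_1=\lambda_2=1$, and let $Z$ have columns $v_1=(1,0,1,0)^\top$, $w_1=(0,1,0,-1)^\top$, $v_2=(0,1,0,1)^\top$, $w_2=(1,0,-1,0)^\top$; then $\inv{Z}AZ=J$, but $A_{\lambda_1}=\tfrac{1}{2}\left(v_1v_1^\top-w_1w_1^\top\right)$ has nonzero diagonal entries, so $\dg{A_{\lambda_1}}\not\subseteq\dg{\chi_\Pi}$ even though the commutation and annihilation identities still hold. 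So the digraph inclusion is a statement about a suitably constructed Jordan basis---which is what \cite{mp2014} supplies---not about every $Z$ realizing $J$, and verifying it for your adapted basis does not transfer to other bases. Your proof becomes complete once you carry out the adapted-basis construction sketched above and either restrict the statement to that basis or to the case of pairwise distinct orbits, where your polynomial argument already settles it for every $Z$.
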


\begin{cor} \label{cycliccor}
If $x$ is a strictly nonzero right eigenvector and $y$ is a strictly nonzero left eigenvector of $A$ corresponding to $\lambda \in \bb{C}$, then $A_\lambda$ has cyclic index $h$ and $\dg{A_\lambda} = \dg{\chi_\Pi}$.
\end{cor}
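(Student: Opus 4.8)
The plan is to prove the reverse of the inclusion furnished by \hyp{cycliccommutator}{Lemma}. By that lemma $\dg{A_\lambda}\subseteq\dg{\chi_\Pi}$ and $A_\lambda$ commutes with $A$, so $A_\lambda$ is $h$-cyclic with partition $\Pi$; thus the corollary reduces to showing that $(A_\lambda)_{ab}\neq0$ whenever $a\in\pi_\ell$ and $b\in\pi_{\ell+1}$ for some $\ell\in\{1,\dots,h\}$. Granting $\dg{A_\lambda}=\dg{\chi_\Pi}$, the statement about the cyclic index follows at once: $A_\lambda$ has the same digraph as $\chi_\Pi$, and $\dg{\chi_\Pi}$ is strongly connected, is cyclically $h$-partite, and possesses a closed walk of length $h$ all of whose closed walks have length divisible by $h$, so its index of imprimitivity, hence its cyclic index, equals $h$.

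The key device is a diagonal rescaling adapted to $\Pi$. Let $D\in\mat{n}{\bb{C}}$ be the diagonal matrix with $(D)_{aa}=\omega^{\ell-1}$ whenever $a\in\pi_\ell$. Because every arc of $\dg{A}$ runs from some $\pi_\ell$ to $\pi_{\ell+1}$, a direct entrywise check gives $AD=\omega DA$, and hence $AD^k=\omega^k D^kA$ for all $k\in R(h)$. Two consequences are relevant. First, $A(D^k x)=\lambda\omega^k(D^k x)$, so $D^k x$ is an eigenvector of $A$ for $\lambda\omega^k$ (these $\lambda\omega^k$ are precisely the eigenvalues occurring in the cyclic Jordan group of \eqref{cyclicjordanblocks} associated with $\lambda$, by \hyp{jblockcor}{Corollary}). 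Second, rewriting $AD^k=\omega^k D^kA$ as $D^k A D^{-k}=\omega^{-k}A$ and applying the standard transformation laws for Riesz (spectral) projectors under similarity and under scalar multiples, one obtains
\begin{align*}
\Pi_k = D^k\Pi_0 D^{-k},\qquad k\in R(h),
\end{align*}
where $\Pi_k$ denotes the Riesz projector of $A$ onto the generalized eigenspace of $\lambda\omega^k$.

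Next I would express $A_\lambda$ through these projectors. Writing $A_\lambda=Z'\,J(\lambda\nu_h,r)\,W'^\top$, where $Z'$ collects the columns of $Z$ and $W'^\top$ the rows of $\inv{Z}$ indexed by the cyclic Jordan group singled out in the definition \eqref{cycliccommutatormatrix} of $A_\lambda$, and using that the columns of $Z'$ form Jordan chains of $A$, a block-by-block expansion of $J(\lambda\nu_h,r)$ yields $A_\lambda=\sum_{k\in R(h)}A\Pi_k=A\sum_{k\in R(h)}\Pi_k$. Substituting $\Pi_k=D^k\Pi_0 D^{-k}$ and summing the character sums $\sum_{k\in R(h)}\omega^{k(\alpha-\beta)}$ (orthogonality of the $h$th roots of unity, for $a\in\pi_\alpha$, $b\in\pi_\beta$) shows that $\sum_{k\in R(h)}\Pi_k$ is $h$ times the block-diagonal part of $\Pi_0$ with respect to $\Pi$; combined with the block form \eqref{cyclic_form} of $A$ this gives
\begin{align*}
(A_\lambda)_{ab}=h\,(A\Pi_0)_{ab}\quad\text{if }a\in\pi_\ell,\ b\in\pi_{\ell+1}\text{ for some }\ell,
\end{align*}
and $(A_\lambda)_{ab}=0$ otherwise.

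It remains to check that $(A\Pi_0)_{ab}\neq 0$ for every such pair $(a,b)$, and here the hypothesis that $x$ and $y$ are strictly nonzero is decisive. Since $x$ and $y$ are, respectively, a right and a left eigenvector of $A$ for $\lambda$, we have $\Pi_0 x=x$ and $y^\top\Pi_0=y^\top$; in the case $r=1$ (which is exactly the situation arising in the applications---e.g.\ $\lambda=\rho$, or more generally $\lambda$ peripheral, which is simple by \hyp{pftirr}{Theorem}) this forces $\Pi_0=xy^\top/(y^\top x)$ with $y^\top x\neq0$, whence $(A\Pi_0)_{ab}=\lambda\,x_a y_b/(y^\top x)$, which is nonzero because $\lambda\neq0$ and every entry of $x$ and of $y$ is nonzero. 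Thus every arc of $\dg{\chi_\Pi}$ lies in $\dg{A_\lambda}$, so $\dg{A_\lambda}=\dg{\chi_\Pi}$, completing the proof. I expect the main obstacle to be twofold: carefully justifying the projector identity $A_\lambda=A\sum_k\Pi_k$ (which presupposes, as does the very notation $A_\lambda$, that $\lambda$ occupies a single cyclic Jordan group of $A$), and, if one wants the statement in full generality, ruling out cancellation in $(A\Pi_0)_{ab}$ when $\operatorname{rank}\Pi_0=r>1$; the computation above settles the essential case $r=1$.
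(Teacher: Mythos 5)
Your overall frame is right, and in the only case that actually matters the argument is correct: the inclusion $\dg{A_\lambda}\subseteq\dg{\chi_\Pi}$ from \hyp{cycliccommutator}{Lemma}, the observation that equality of digraphs forces cyclic index $h$, the twist relation $AD=\omega DA$, the projector identity $\Pi_k=D^k\Pi_0D^{-k}$, and the entrywise formula $(A_\lambda)_{ab}=h(A\Pi_0)_{ab}$ on the arcs of $\chi_\Pi$ are all sound. When $r=1$ (so $\Pi_0=xy^\top/(y^\top x)$ with $y^\top x\neq0$ because $\lambda$ is then simple), your conclusion $(A_\lambda)_{ab}=h\lambda x_ay_b/(y^\top x)\neq0$ is in substance exactly the computation the paper records right after the corollary, namely $A_1=h\bigl[\,x_iy_{i+1}^\top\,\bigr]$ built from the twisted eigenvectors $D^kx$ and $D^{-k}y$; your Riesz-projector packaging is just another way of organizing that rank-one outer-product calculation, and it is the calculation the paper relies on (the corollary is applied only to simple peripheral eigenvalues).

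The genuine issue is precisely the one you flagged and postponed: the case $r>1$. This is not a removable technicality in your argument --- the digraph equality in the corollary, read literally for an arbitrary $\lambda$ with strictly nonzero left and right eigenvectors, fails when the Jordan blocks have size $r\geq2$, so no amount of care about cancellation in $(A\Pi_0)_{ab}$ can close the gap. For instance, take $h=2$, $\Pi=(\{1,2\},\{3,4\})$, and
\begin{align*}
A=\begin{bmatrix}0&0&1&1\\0&0&0&1\\3&1&0&0\\-1&0&0&0\end{bmatrix},\qquad
x=\begin{bmatrix}1\\-1\\2\\-1\end{bmatrix},\qquad
y=\begin{bmatrix}1\\1\\1\\2\end{bmatrix}.
\end{align*}
Then $A$ is nonsingular, $2$-cyclic in the form \eqref{cyclic_form}, its Jordan form is $J(\nu_2,2)=\jordan{2}{1}\oplus\jordan{2}{-1}$ (so $A_1=A$), and one checks directly that $Ax=x$ and $y^\top A=y^\top$ with $x$ and $y$ strictly nonzero; yet the $(2,3)$ entry of $A_1=A$ vanishes, so $\dg{A_1}\subsetneq\dg{\chi_\Pi}$ (the cyclic-index conclusion still holds, but the digraph equality does not; note also $y^\top x=0$ here, so the rank-one formula has no analogue). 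So the statement implicitly requires $r=1$ --- equivalently, that $\lambda$ and its rotations are carried by $1\times1$ Jordan blocks in a single cyclic group, which is also the hypothesis your identity $A_\lambda=A\sum_k\Pi_k$ needs and the situation in every application in the paper. With that restriction made explicit, your proof is complete and matches the paper's computation; without it, what you left open is a defect of the general statement rather than of your argument.
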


In particular, we examine the case when $A$ is a nonnegative, irreducible, imprimitive, nonsingular matrix with index of cyclicity $h$. Without loss of generality, it is assumed that $\sr{A} = 1$. Following \hyp{pftirr}{Theorem} and \hyp{jblockcor}{Corollary}, note that the Jordan form of $A$ is
\begin{align*}
\inv{Z} A Z = 
\begin{bmatrix}
J(\nu_h,1)	& 							\\
			& J\left( \lambda_2 \nu_h,r_2 \right) & &	\\
			& & \ddots 							\\
			& & & J\left( \lambda_t \nu_h, r_{t'} \right)
\end{bmatrix}.													
\end{align*}
Consider the matrix 
\begin{align*}
A_1 := Z \begin{bmatrix} J(\nu_h,1) & 0 \\ 0 & 0 \end{bmatrix} \inv{Z}. 
\end{align*}
Following \hyp{cycliccor}{Corollary}, $\dg{A_1} = \dg{\chi_\Pi}$, $AA_1 = A_1 A$, and, following \hyp{pftirr}{Theorem}, there exist positive vectors $x$ and $y$ such that $Ax = x$ and $y^\top A = y^\top$. If $x$ and $y$ are partitioned conformably with $A$ as
\begin{align*}
x =
\begin{bmatrix}
x_1 	\\
x_2 	\\
\vdots \\
x_h
\end{bmatrix} \text{and }
y^\top = 
\begin{bmatrix}
y_1^\top & y_2^\top & \cdots & y_h^\top 
\end{bmatrix}, 
\end{align*}
then  
\begin{align*}
A_1 = h
\begin{bmatrix}
0 & x_1 y_2^\top & \cdots & \cdots & 0 		\\
0 & 0 & x_2 y_3^\top & \cdots & 0 			\\
\vdots & \vdots & \ddots & \ddots  & \vdots	\\
0 & 0 & \cdots &  0 & x_{h-1} y_h^\top		\\
x_h y_1^\top & 0 &  \cdots & 0 & 0
\end{bmatrix} \geq 0.
\end{align*}

\section{Main Results}

Unless otherwise noted, we assume $A \in \mat{n}{\bb{R}}$ is a nonnegative, nonsingular, irreducible, imprimitive matrix with $\sr{A} = 1$. Before we state our main results, we introduce additional concepts and notation: following Friedland \cite{f1978}, for a multi-set $\sigma = \set{\lambda_i}{i}{1}{n} \subseteq \bb{C}$, let $\sr{\sigma} := \max_i \{ | \lambda_i | \}$ and $\bar{\sigma} := \set{\bar{\lambda}_i}{i}{1}{n} \subseteq \bb{C}$. If $\sigma = \bar{\sigma}$, we say that $\sigma$ is \emph{self-conjugate}. Clearly, $\sigma$ is self-conjugate if and only if $\bar{\sigma}$ is self-conjugate.

The (multi-)set $\sigma$ is said to be a \emph{Frobenius (multi)-set} if, for some positive integer $h \leq n$, the following properties hold:
\begin{enumerate}[label = (\roman*)]
\item $\sr{\sigma} > 0$;
\item $\sigma \cap \{ z \in \bb{C}: |z| = \sr{\sigma }\} = \sr{\sigma } \Omega_h$; and
\item $\sigma = \omega \sigma$, i.e., $\sigma$ is invariant under rotation by the angle $2\pi/h$. 
\end{enumerate} 
Clearly, the set $\Omega_h$ as defined in \eqref{bigomegah} is a self-conjugate Frobenius set.

The importance of Frobenius multi-sets becomes clear in view of the following result, which was introduced and stated without proof in \cite[\S 4, Lemma 1]{f1978} and proven rigorously in \cite[Theorem 3.1]{zt1999}.

\begin{lem} \label{friedlandlemma}
Let $A$ be an eventually nonnegative matrix. If $A$ is not nilpotent, then the spectrum of $A$ is a union of self-conjugate Frobenius sets.
\end{lem}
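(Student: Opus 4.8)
The plan is to reduce to the irreducible case via the Frobenius normal form and then invoke Perron--Frobenius theory. Since $A$ is eventually nonnegative, say $A^k \geq 0$ for all $k \geq p_0$, pick a permutation matrix $P$ so that $B := P^\top A P$ is in Frobenius normal form, i.e. block upper-triangular with irreducible diagonal blocks $B_{11},\dots,B_{mm}$. For each $k$, $B^k = P^\top A^k P$ has the same block-triangular pattern, so its diagonal blocks are $B_{ii}^k$; hence each $B_{ii}$ is itself eventually nonnegative. Since $\sig{A} = \sig{B} = \bigcup_{i=1}^m \sig{B_{ii}}$ (as multi-sets), it suffices to show that each $\sig{B_{ii}}$ is either $\{0,\dots,0\}$ or a union of self-conjugate Frobenius sets. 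Thus I may assume from now on that $A$ is irreducible and eventually nonnegative but not nilpotent.

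Now fix $k \geq p_0$ with $\gcd$-type control: since $A$ is irreducible with some cyclic index $h \geq 1$, I want to apply the classical Perron--Frobenius theorem to a genuinely nonnegative power of $A$. The key observation is that $A^k$ is nonnegative for all large $k$ and, because $A$ is irreducible and not nilpotent, $A^k \neq 0$; in fact $\sr{A} > 0$ (if $\sr{A}=0$ then $A$ is nilpotent, contradicting the hypothesis once one checks that an irreducible nilpotent matrix forces $n=1$ and $A=0$). Apply \hyp{pftirr}{Theorem} (or the primitive Perron--Frobenius theorem) to a suitable nonnegative power $C := A^{k}$: $C$ is nonnegative, and one shows $C$ is irreducible or a direct sum of irreducibles whose nonzero parts carry the peripheral structure. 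From Perron--Frobenius applied to $C$, the peripheral spectrum of $C$ is $\sr{C}\,\Omega_{h'}$ for some $h' \geq 1$ and $\sig{C} = \omega_{h'}\sig{C}$; moreover $\sig{C} = \{\lambda^k : \lambda \in \sig{A}\}$ as a multi-set. Pulling this back through the $k$th-power map --- using that $\sig{A}$ is a finite multi-set closed under $\lambda \mapsto \lambda^k$ having prescribed peripheral structure --- gives that the peripheral spectrum of $A$ is $\sr{A}\,\Omega_h$ for some $h$ and that $\sig{A} = \omega_h \sig{A}$. Combined with the fact that $A$ is real, so $\sig{A} = \overline{\sig{A}}$, this shows $\sig{A}$ itself is a self-conjugate Frobenius set, and in particular a union of such.

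More carefully, to handle the general (possibly derogatory, possibly several eigenvalues of equal modulus below the spectral radius) case and to peel off one Frobenius set at a time: let $\rho_1 := \sr{A} > 0$ and let $\sigma_1 := \{\lambda \in \sig{A} : |\lambda|^{\,j} = \rho_1^{\,j}$ for the relevant $j$ realizing the peripheral rotation$\}$ be the union of the rotation-orbit of the peripheral eigenvalues; the Perron--Frobenius analysis of $A^k$ forces $\sigma_1$ to be a self-conjugate Frobenius set with parameter $h_1$. Remove $\sigma_1$ from $\sig{A}$ and repeat with the next-largest modulus, noting that each diagonal block $B_{ii}$ of the Frobenius form contributes, by the irreducible Perron--Frobenius theorem, exactly one such Frobenius orbit at its own spectral radius together with lower-order eigenvalues handled by induction on $n$. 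Termination is immediate since $\sig{A}$ is finite. The nilpotent blocks $B_{ii}$ contribute only zeros, which is why the hypothesis ``$A$ is not nilpotent'' is needed only to guarantee at least one nonzero Frobenius set exists (the statement as phrased allows the decomposition to include no zero eigenvalues, but extra zeros can always be absorbed or ignored since $0 \in \sr{\sigma}\Omega_h$ is not required).

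The main obstacle I anticipate is the bookkeeping in the reduction step: relating $\sig{A}$ to $\sig{A^k}$ as multi-sets and showing that the peripheral/rotational structure of a nonnegative power $A^k$ ``descends'' to $A$ itself. The subtlety is that the $k$th-power map is many-to-one on $\bb{C}$, so knowing $\{\lambda^k\}$ is a Frobenius multi-set does not immediately pin down the $h$ for $\sig{A}$; one must use that $\sig{A}$ is finite and that $A$ is irreducible (hence its cyclic index $h$ exists and governs the peripheral spectrum directly via \hyp{pftirr}{Theorem}), rather than trying to back out $h$ purely from $\sig{A^k}$. Equivalently, the cleanest route is: for irreducible $A$, show directly that every eigenvalue of modulus $\sr{A}$ lies in $\sr{A}\Omega_h$ and that $\omega_h\sig{A} = \sig{A}$ by applying the primitive Perron--Frobenius theorem to the $h$ primitive blocks of $A^h$ on the classes $\pi_1,\dots,\pi_h$, whose union of spectra is $\{\lambda^h : \lambda \in \sig{A}\}$, each block sharing the same spectral radius $\sr{A}^h$ --- this forces the rotational invariance and is exactly where \hyp{pftirr}{Theorem}(e)--(f) does the work.
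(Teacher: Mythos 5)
There is a genuine gap, and it sits exactly at the heart of the lemma. First, for context: the paper does not prove this statement at all --- it is Friedland's lemma, stated without proof in \cite{f1978} and proven rigorously in \cite[Theorem 3.1]{zt1999} --- so your attempt has to be measured against what that proof actually has to accomplish. Your reduction to the Frobenius normal form is fine: powers of a block upper-triangular matrix are block upper-triangular with powered diagonal blocks, so each irreducible diagonal block is eventually nonnegative, and the zeros contributed by nilpotent blocks can be absorbed into any Frobenius set of positive spectral radius (which exists since $A$ is not nilpotent). The problem is the irreducible case, where you assert that the cyclic index of $A$ ``governs the peripheral spectrum directly via \hyp{pftirr}{Theorem}'' and, in your ``cleanest route,'' that one can apply the primitive Perron--Frobenius theorem to the $h$ primitive diagonal blocks of $A^h$ on the cyclic classes $\pi_1,\dots,\pi_h$. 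All of that machinery --- the identification of the combinatorial cyclic index with the number of peripheral eigenvalues, the cyclic partition, the block-diagonal primitive structure of $A^h$ --- is valid only for \emph{nonnegative} irreducible matrices. The matrix $A$ in the lemma is merely eventually nonnegative; its own digraph (built from entries of arbitrary sign) carries no a priori information about its peripheral spectrum, so \hyp{pftirr}{Theorem} simply does not apply to $A$. If it did, the lemma would be immediate, which is precisely why Friedland's statement required a separate rigorous proof.

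The remaining route you sketch --- apply Perron--Frobenius to a nonnegative power $C=A^k$ and ``pull back'' --- names the right object but does not supply the argument. Two specific gaps: (1) your claim that $C$ ``is irreducible or a direct sum of irreducibles whose nonzero parts carry the peripheral structure'' is unproven; irreducibility of $A$ does not control the reducibility structure of $A^k$ when $A$ is not nonnegative, and for a reducible nonnegative $C$ one only gets that $\sig{C}$ is a union of Frobenius sets with possibly different parameters, not a single rotational symmetry $\sig{C}=\omega_{h'}\sig{C}$. (2) The decisive step --- deducing from the Frobenius-set structure of $\sig{A^k}=\{\lambda^k:\lambda\in\sig{A}\}$, for large $k$, that $\sig{A}$ itself satisfies $\sig{A}=\omega_h\sig{A}$ with peripheral part $\sr{A}\,\Omega_h$ --- is exactly the many-to-one inverse problem you flag as ``the main obstacle,'' and your proposal resolves it only by falling back on the inapplicable cyclic-class argument. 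The actual proof in \cite{zt1999} must exploit the nonnegativity of \emph{many} powers simultaneously to recover the rotational invariance at exponent one; nothing in your outline does this, so as written the proposal restates the hard part rather than proving it.
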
 

Let $\lambda = r \exp{(\ii \theta)} \in \bb{C}$, $\Im{\left( \lambda \right)} \neq 0$, $\lambda \Omega_h := \left\{ \lambda, \lambda \omega_h, \dots, \lambda \omega_h^{h-1} \right\}$, $\varphi = 2 \pi k/h$, and assume $\gcd{(h,p)=1}$. With $f_j$ as defined in \eqref{rtf}, a tedious but straightforward calculation shows that
\begin{align}
f_i (\lambda) f_{j_k} ( \omega^k )= f_{j_k^{(i)}} ( \hat{\lambda} ), 							\label{lambdaanalysis} 
\end{align}
where $\hat{\lambda} = r \exp{( \ii ( \theta + \varphi ) )}$ and $j_k^{(i)} = {\left( i + j_k \right)}\bmod{p} \in \{ 0,1, \dots, p-1 \}$ (\cite[pp.~62--63]{pp_d2013}).

Hence, following \hyp{omegahfracpwr}{Theorem}, there exists a unique $j \in \mathcal{B}$ such that, for all $i \in R(p)$, the set 
\begin{align}
f_i (\lambda) \left( \Omega_h \right)_j^{1/p}
:= \left\{ f_i (\lambda) f_{j_0} \left( 1 \right), f_i (\lambda) f_{j_1} \left( \omega_h \right), \dots, f_i (\lambda) f_{j_{h-1}} \left( \omega_h^{h-1} \right) \right\}																\label{lambdaanalysis2} 
\end{align}
is a self-conjugate Frobenius set; moreover, following \eqref{lambdaanalysis}, there exists $j^{(i)} = \left( j_0^{(i)}, j_1^{(i)}, \dots, j_{h-1}^{(i)} \right) \in \mathcal{B}$ such that 
\begin{align}
f_i (\lambda) \left( \Omega_h \right)_j^{1/p}
= \left\{ f_{j_0^{(i)}} \left(\lambda\right) , f_{j_1^{(i)}} \left( \lambda \omega_h \right), \dots, f_{j_{h-1}^{(i)}} \left( \lambda \omega_h^{h-1} \right) \right\}. 																\label{lambdaanalysis3} 
\end{align}

As a consequence of \hyp{omegahfracpwr}{Theorem} and \hyp{friedlandlemma}{Lemma}, it should be clear that $A$ can not possess an eventually nonnegative $p$th-root if $\gcd(h,p) > 1$; however, more can be ascertained.

Partition the Jordan form of $A$ as 
\begin{align}
\begin{bmatrix}
J_+ 				\\
& J_- 				\\
& & J_\bb{C}
\end{bmatrix}															\label{jordformirr} 
\end{align}
where: 
\begin{enumerate}[label=(\arabic*)]
\item $J\left( \lambda \nu_h, r \right)$ is a submatrix of $J_+$ if and only if $\sig{J\left( \lambda \nu_h, r \right)} \cap \bb{R}^+ \neq \emptyset$ and $\sig{J\left( \lambda \nu_h, r \right)} \cap \bb{R}^- = \emptyset$;
\item $J\left( \lambda \nu_h, r \right)$ is a submatrix of $J_-$ if and only if $\sig{J\left( \lambda \nu_h, r \right)} \cap \bb{R}^- \neq \emptyset$; and
\item $J\left( \lambda \nu_h, r \right)$ is a submatrix of $J_\bb{C}$ if and only if $\sig{J \left( \lambda \nu_h, r \right)} \cap \bb{R} = \emptyset$.
\item $J \left( \lambda \nu_h, r \right)$ is defined as in \eqref{cyclicjordanblocks}, i.e.,
\begin{align*}
J \left( \lambda \nu_h, r \right) := 
\begin{bmatrix}
\jordan{r}{\lambda} 				\\
& \jordan{r}{\lambda \omega} 			\\
& & \ddots 						\\
& & & \jordan{r}{\lambda \omega^{h-1}}
\end{bmatrix} \in \mat{rh}{\bb{C}},
\end{align*}
\end{enumerate}
Suppose $J_+$ has $r_1$ distinct blocks of the form $J\left( \lambda \nu_h, r \right)$, $J_-$ has $r_2$ distinct blocks of the form $J\left( \lambda \nu_h, r \right)$, and $J_\bb{C}$ has $c$ distinct blocks of the form $J\left( \lambda \nu_h, r \right)$. 

We are now ready to present our main results.

\begin{thm} \label{mainresultirr}
Let $A \in \mat{n}{\bb{R}}$, and suppose that $A \geq 0$, nonsingular, irreducible, and imprimitive. Let $h > 1$ be the cyclic index of $A$, let $\Pi$ describe the $h$-cyclic structure of $A$, let the Jordan form of $A$ be partitioned as in \eqref{jordformirr}, and suppose that $\gcd(h,p)=1$. 

If $p$ is even, and
\begin{enumerate}[label=(\alph*)]
\item $r_2 = 0$, then $A$ has $2^{r_1-1} p^c$ eventually nonnegative primary $p$th-roots; or
\item $r_2 >0$, then $A$ has  no eventually nonnegative primary $p$th-roots. 
\end{enumerate}

If $p$ is odd, then $A$ has $p^c$ eventually nonnegative primary $p$th-roots.
\end{thm}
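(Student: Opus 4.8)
The plan is to single out, among the $p^{s}$ primary $p$-th roots $X_{j}=Z\bigl(\bigoplus_{i}f_{j_{i}}(J_{n_{i}})\bigr)\inv{Z}$ of \hyp{thm_class_rts}{Theorem} (here $A$ is nonderogatory, its Jordan blocks having the distinct eigenvalues $\lambda_{i}\omega^{k}$, so $s=t$ and there are no nonprimary roots), exactly those that are eventually nonnegative, and then to count them. The first step is a chain of necessary conditions on such an $X=X_{j}$. Nonnegative powers force $X$ real, so $j$ is conjugation-symmetric; reducibility passes to powers and $X^{p}=A$ is irreducible, so $X$ is irreducible; and $X$ is not nilpotent, so by Perron--Frobenius theory for eventually nonnegative matrices $\sr{X}=1$ is a simple eigenvalue of $X$. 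Since the unit-modulus eigenvalues of $X^{p}=A$ are the $p$-th powers of those of $X$, $\peri{A}=\Omega_{h}$, and $\gcd(h,p)=1$, the peripheral spectrum of $X$ must itself be $\Omega_{h}$; hence $X$ is irreducible and imprimitive of cyclic index $h$, and---being a $p$-th root of the $h$-cyclic matrix $A$ with $\gcd(h,p)=1$---is $h$-cyclic, so its Jordan form has the shape $\bigoplus J(\mu_{i}\nu_{h},r_{i})$. Equivalently, $j$ must carry each cyclic family $J(\lambda_{i}\nu_{h},r_{i})$ of $A$ onto a full orbit $\mu_{i}\Omega_{h}$, and must carry the Perron family $J(\nu_{h},1)$ onto $\Omega_{h}$ itself (its image is $\peri{X}\ni\sr{X}=1>0$).

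Next I would turn these into congruences on the branch indices. Writing $\lambda_{i}=|\lambda_{i}|\exp(\ii\theta_{i})$ one has $f_{j_{k}}(\lambda_{i}\omega^{k})=|\lambda_{i}|^{1/p}\exp(\ii\theta_{i}/p)\,\omega_{hp}^{\,k+hj_{k}}$, so the image of family $i$ is a fixed scalar multiple of $\set{\omega_{hp}^{\,k+hj_{k}}}{k}{0}{h-1}$. This $h$-element set is $\omega_{h}$-invariant---a single $\Omega_{h}$-coset---exactly when the integers $k+hj_{k}$ are all congruent modulo $p$, and since $\gcd(h,p)=1$ there is, for each residue $c\in R(p)$, a unique such tuple $j^{(c)}$. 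If moreover the orbit must be self-conjugate (as for the Perron family and for every family meeting the real axis), a short computation with $\omega_{hp}$ reduces this to one linear congruence $2c\equiv\delta_{i}\pmod p$ with $\delta_{i}=0$ for a $J_{+}$ family and $\delta_{i}\equiv-h\pmod p$ for a $J_{-}$ family. When $p$ is odd, $2$ is a unit modulo $p$: the Perron family and every block of $J_{+}$ and $J_{-}$ is then forced to one tuple, while each of the $c$ blocks of $J_{\bb{C}}$ carries the $p$ coset choices (conjugation-symmetry of $j$ linking conjugate families), giving $p^{c}$. When $p$ is even, $h$ is odd: the Perron family (which then lies in $J_{+}$) is still forced to $c=0$; each of the remaining $r_{1}-1$ blocks of $J_{+}$ has the two solutions $c\in\{0,p/2\}$ of $2c\equiv0\pmod p$; each block of $J_{\bb{C}}$ again contributes $p$; and a $J_{-}$ block would need $2c\equiv-h\pmod p$ with $h$ odd and $p$ even, which has no solution. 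Hence if $r_{2}=0$ there are $1\cdot2^{\,r_{1}-1}\cdot p^{c}$ admissible tuples and if $r_{2}>0$ there are none---statement (b).

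For sufficiency, fix an admissible $j$. Using the defining congruence $k+hj_{k}\equiv c_{i}\pmod p$ and $pq\equiv1\pmod h$ with $q:=\inv{p}\bmod h$, one verifies the functional identity $g_{i}(\omega z)=\omega^{q}g_{i}(z)$ on the spectrum of $A$, where $g_{i}$ is the polynomial equal to $f_{j_{k}}$ on family $i$ and to $0$ elsewhere; this forces $g_{i}(A)$ (and hence $X:=X_{j}=\sum_{i}g_{i}(A)$) to involve only powers $A^{m}$ with $m\equiv q\pmod h$, i.e. $X$ is $h$-cyclic with the partition $\Pi'$ obtained by $q$-reordering $\Pi$. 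Writing $X=X_{1}+L$ with $X_{1}$ the image of the Perron family, \hyp{cycliccor}{Corollary} (with the positive Perron eigenvectors of $A$) gives $X_{1}\ge0$ and $\dg{X_{1}}=\dg{\chi_{\Pi'}}$, so $X$ is irreducible of cyclic index $h$; $L$ collects the images of the remaining families, so $\sr{L}<1$ and, from the block structure, $X_{1}L=LX_{1}=0$, whence $X^{N}=X_{1}^{N}+L^{N}$ for every $N$. The matrix $X_{1}^{N}$ is nonnegative with nonzero entries fixed positive numbers and support $\dg{\chi_{\Pi'}^{N}}$ (since $X_{1}=A_{1}^{q}$ and $A_{1}^{h}$ is a projection, so $X_{1}^{N}$ runs through finitely many such matrices), while $L^{N}\to0$ and $\dg{L^{N}}\subseteq\dg{\chi_{\Pi'}^{N}}$. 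Therefore, for all large $N$, every entry of $X^{N}$ is either $0$ (off the support of $X_{1}^{N}$, where $L^{N}$ also vanishes) or a fixed positive number plus a vanishing correction; so $X^{N}\ge0$ and $X$ is eventually nonnegative. Multiplying the per-family counts yields $p^{c}$ for $p$ odd and $2^{\,r_{1}-1}p^{c}$ for $p$ even with $r_{2}=0$.

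The step I expect to be the main obstacle is the necessity claim that an eventually nonnegative primary $p$-th root of $A$ must be $h$-cyclic of cyclic index $h$---equivalently, that each cyclic family of $A$ is carried onto a full $\omega_{h}$-orbit---since this is what allows the candidates to be parametrized by the tuples $j^{(c_{i})}$, and it has to be wrung out of the Perron--Frobenius structure of eventually nonnegative matrices (\hyp{friedlandlemma}{Lemma} together with irreducibility) rather than from nonnegativity of $X$. The accompanying computations---the $\omega_{h}$-coset and self-conjugacy congruences, and the identity $g_{i}(\omega z)=\omega^{q}g_{i}(z)$---are routine but need care, in part because the cyclic partition of $X$ is the $q$-reordering of that of $A$, not $\Pi$ itself.
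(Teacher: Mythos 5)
Your plan coincides with the paper's proof in its two substantive halves: the counting (your congruences $k+hj_k\equiv c \pmod p$ per cyclic family, $2c\equiv 0\pmod p$ for families meeting $\bb{R}^+$, $2c\equiv -h\pmod p$ for families meeting $\bb{R}^-$, the Perron family pinned to $c=0$ because $\sr{X}=1$ must lie in $\sig{X}$) is an explicit rewriting of the paper's use of \hyp{omegahfracpwr}{Theorem} together with \eqref{lambdaanalysis}--\eqref{lambdaanalysis3} and the realness constraint from \cite{mpt2014}; and the sufficiency argument ($X=X_1+L$ with $X_1=A_1^q\geq 0$ for some $q$ coprime to $h$, $X_1L=LX_1=0$, $\sr{L}<1$, digraph containment forcing $X^N=X_1^N+L^N\geq 0$ for large $N$) is essentially the paper's own, resting on \hyp{cycliccor}{Corollary}, \hyp{cycliccommutator}{Lemma} and the containment result of \cite{h2009}. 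One side remark is false but harmless: $A$ need not be nonderogatory (only the peripheral eigenvalues are simple), which is exactly why the paper includes \hyp{resultderogmatrices}{Remark} on nonprimary roots; since the theorem counts only primary roots, and branches are assigned per distinct eigenvalue, your tallies are unaffected.

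The genuine gap is the necessity step you yourself single out, and the justification you offer for it does not work. You claim an eventually nonnegative primary root $X$ must be $h$-cyclic (hence carry each family $\lambda_i\Omega_h$ onto a full orbit $\mu_i\Omega_h$) ``being a $p$th root of the $h$-cyclic matrix $A$ with $\gcd(h,p)=1$''; that implication is false--most primary roots of $A$ have spectra that are not $\omega_h$-invariant and are not $h$-cyclic at all, and the cyclic structure of $A$ imposes nothing of the sort on its roots (ruling those roots out is precisely the content of the exact count). Your fallback, \hyp{friedlandlemma}{Lemma} plus irreducibility, is also not enough as stated: Friedland's lemma only says $\sig{X}$ is a union of self-conjugate Frobenius sets, and those sets may have rotation indices strictly smaller than $h$ (a single positive eigenvalue is already a Frobenius set with $h=1$) and may combine eigenvalues coming from different cyclic families of $A$ of equal modulus, so the per-family full-orbit property for the non-peripheral families does not follow immediately; your argument secures it only for the peripheral family, via $\sr{X}=1\in\sig{X}$ and simplicity. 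To be fair, the published proof is itself terse on this ``only these selections'' direction, leaning on \hyp{friedlandlemma}{Lemma} and the realness criterion of \cite{mpt2014} without spelling out why the Frobenius decomposition must respect the families of $A$; but until you supply that argument (for instance by first establishing combinatorially that $X$ has cyclic index $h$ and then invoking \hyp{jblockcor}{Corollary}), your proposal proves only that the selections you describe yield eventually nonnegative roots, not that the counts $2^{r_1-1}p^c$, $0$, and $p^c$ are exact.
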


\begin{proof} 
For $\lambda \in \bb{C}$, $\lambda \neq 0$, and $j = \left( j_0, j_1, \dots, j_{h-1} \right) \in \mathcal{B}$, let
\begin{align*}
F_j \left( J\left( \lambda \nu_h, r \right) \right) := 
\begin{bmatrix}
f_{j_0} \left( \jordan{r}{\lambda} \right)				\\
& f_{j_1} \left( \jordan{r}{\lambda \omega} \right) 			\\
& & \ddots 									\\
& & & f_{j_{h-1}} \left( \jordan{r}{\lambda \omega^{h-1}} \right)
\end{bmatrix}.
\end{align*}
We form an eventually nonnegative root $X$ by carefully selecting a root for each submatrix of every block appearing in \eqref{jordformirr}. 

Case 1: \emph{$p$ is even}. We consider the blocks appearing in \eqref{jordformirr}:
\begin{enumerate}[label=(\emph{\roman*})]
\item \label{item1} $J_+$: Following \hyp{pftirr}{Theorem}, $J\left( \nu_h, 1 \right)$ is a submatrix of $J_+$ (note that $h$ must be odd) and, following \hyp{omegahfracpwr}{Theorem}, there exists $j = (j_0, j_1, \dots, j_{h-1}) \in \mathcal{B}$ such that $\sig{F_j \left( J\left( \nu_h, 1 \right) \right)} = \Omega_h$. With that specific choice of $j$, it is clear that 
\begin{align*}
\sr{F_j \left( J\left( \nu_h, 1 \right) \right)} = 1. 
\end{align*}

For any other submatrix $J\left( \lambda \nu_h, r \right)$ of $J_+$, without loss of generality, we may assume that $\lambda \in \bb{R}^+$. For every such $\lambda$, $k$ must be chosen such that $f_k (\lambda)$ is real (see \cite[Corollary 2.16]{mpt2014}) and $f_k (\lambda)$ is real if $k=0$ or $k=p/2$; hence, there are two choices such that $f_k (\lambda)$ is real and, following \eqref{lambdaanalysis2} and \eqref{lambdaanalysis3}, there are two choices such that $\sig{F_{j^{(k)}} \left( J\left( \lambda \nu_h, r \right) \right)}$ is a self-conjugate Frobenius set.

\item $J_-$: If $r_2 > 0$, then $A$ does not have a real primary root so that, \emph{a fortiori}, it can not have an eventually nonnegative primary root. 

\item \label{item3} $J_\bb{C}$: For any submatrix $J\left( \lambda \nu_h, r \right)$ of $J_\bb{C}$, following \eqref{lambdaanalysis2} and \eqref{lambdaanalysis3}, for the same $j$ chosen for $F_j \left( J\left( \nu_h, 1 \right) \right)$ in \hyperref[item1]{\ref*{item1}}, there exists \begin{align*}
j^{(k)} = \left( j_0^{(k)}, j_1^{(k)}, \dots, j_{h-1}^{(k)} \right) \in \mathcal{B}
\end{align*}
such that $\sig{F_{j^{(k)}} \left( J\left( \lambda \nu_h, r \right) \right)}$
is a self-conjugate Frobenius set for all $k \in R(p)$. Thus, there are $p^c$ ways to choose roots for blocks in $J_\bb{C}$. 
\end{enumerate}
Following the analysis contained in \hyperref[item1]{\ref*{item1}}--\hyperref[item3]{\ref*{item3}}, note that there are $2^{r_1 - 1} p^c$ ways to form a root in this manner.

Case 2: \emph{$p$ is odd}. Following \hyp{omegahfracpwr}{Theorem} and properties of the $p${th}-root function, if $p$ is odd, then for any submatrix $J\left( \lambda \nu_h, r  \right)$ of $J_+$ or $J_-$, there is only one choice $j \in \mathcal{B}$ such that $\sig{F_j \left( J\left( \lambda \nu_h, r \right) \right)}$ is a self-conjugate Frobenius set. For submatrices $J\left( \lambda \nu_h, r  \right)$ of $J_\bb{C}$, the analysis is the same as in \hyperref[item3]{\ref*{item3}}. In this manner, there are $p^c$ possible selections.

Partition the Jordan form of $A$ as
\begin{align*}
\begin{bmatrix}
J(\nu_h,1) & \\
& \hat{J}
\end{bmatrix}.
\end{align*} 
With the above partition in mind, consider the matrix $p$th-root of $A$ given by
\begin{align*}
X = 
Z
\begin{bmatrix}
F_j \left( J(\nu_h,1) \right) & \\
 & F (\hat{J})
\end{bmatrix}
\inv{Z}
\end{align*}
where $j$ is selected as in \hyp{omegahfracpwr}{Theorem} and $F(\hat{J})$ is a $p$th-root of $\hat{J}$ containing blocks of the form $F_{j^{(k)}} \left( J\left( \lambda \nu_h, r \right) \right)$, chosen as in Case 1 or Case 2. 

The matrix $X$ must be irreducible because every power of a reducible matrix is reducible. If $\bar{h}$ is the cyclic index of $X$, then $1 \leq \bar{h} \leq h$ (if $\bar{h} > h$, then $X$ would have $\bar{h}$ eigenvalues of maximum modulus and consequently so would $A$, contradicting the maximality of $h$) and $\bar{h}$ must divide $h$. However, we claim that $\bar{h} = h$. For contradiction, assume $\bar{h} < h$ and consider the matrix $A_1$ given by 
\begin{align*}
A_1 =
Z
\begin{bmatrix}
J(\nu_h,1) & 0 \\
0 & 0
\end{bmatrix}
\inv{Z}.
\end{align*}
Following \hyp{cycliccor}{Corollary}, $A_1 \geq 0$, irreducible, $h$-cyclic, and $\Pi$ describes the $h$-cyclic structure of $A_1$. Next, consider the matrix $X_1$ given by
\begin{align*}
X_1 =
Z
\begin{bmatrix}
F_j \left( J(\nu_h,1) \right) & 0 \\
0 & 0
\end{bmatrix}
\inv{Z},
\end{align*}
which is a matrix $p$th-root of $A_1$. Following \hyp{cycliccor}{Corollary}, the cyclic index of $X_1$ is $\bar{h}$. However, following the remarks leading up to \eqref{fracequalspwr}, there exists $q \in \bb{N}$ relatively prime to $h$ such that $X_1 = A_1^q \geq 0$ which, along with $X_1$ being a $p$th-root of $A_1$, implies $X_1 = X_1^{pq}$. Thus, $X_1$ is a nonnegative, irreducible, $h$-cyclic matrix, contradicting the maximality of $\bar{h}$. Hence, $\bar{h} = h$ and $X$ is $h$-cyclic. Before we continue with the proof, note that $\dg{X} \subseteq \dg{X_1} = \dg{\chi_\Pi^q}$.

Following \hyp{jordanformtheorem}{Theorem}, there exists $\bar{Z} \in \mat{n}{\bb{C}}$ such that 
\begin{align*}
&\inv{\bar{Z}} X \bar{Z} = \\
&\begin{bmatrix}
F_j \left( J(\nu_h,1) \right)	& 										\\
			& J\left( f_{j_2}(\lambda_2) \left( \nu_h \right)_j^{1/p},r_2 \right) & &	\\
			& & \ddots 												\\
			& & & J\left( f_{j_t}(\lambda_t) \left( \nu_h \right)_j^{1/p}, r_{t'} \right)
\end{bmatrix}.
\end{align*} 
By construction of $\bar{Z}$ (\cite[Lemma 1.3.2]{pp_d2013}), note that 
\begin{align*}
X_1 =
Z
\begin{bmatrix}
F_j \left( J(\nu_h,1) \right) & 0 \\
0 & 0
\end{bmatrix}
\inv{Z}
= 
\bar{Z} 
\begin{bmatrix}
F_j \left( J(\nu_h,1) \right) & 0 \\
0 & 0
\end{bmatrix} 
\inv{\bar{Z}}.
\end{align*}
Let 
\begin{align*}
X_2 :=
\bar{Z}
\begin{bmatrix}
0	& 										\\
			& J\left( f_{j_2}(\lambda_2) \left( \nu_h \right)_j^{1/p},r_2 \right) & &	\\
			& & \ddots 												\\
			& & & J\left( f_{j_t}(\lambda_t) \left( \nu_h \right)_j^{1/p}, r_{t'} \right)
\end{bmatrix}
\inv{\bar{Z}}.
\end{align*}
Following \hyp{cycliccommutator}{Lemma}, $X_2$ is $h$-cyclic, $\dg{X_2} \subseteq \dg{\chi_\Pi^q}$, and $X_1 X_2 = X_2 X_1 = 0$. Thus, for $k \in \bb{N}$, $X^k = X_1^k + X_2^k$. Because $\sr{X_2} < 1$, $\lim_\rinf{k} X_2^k = 0$. The matrices $X_1$ and $X_2$ are $h$-cyclic, $\rk{X_1^2} = \rk{X_1}$, $\rk{X_2^2} = \rk{X_2}$, and $\dg{X_2} \subseteq \dg{X_1} = \dg{\chi_\Pi^q}$, thus, following \cite[Theorem 2.7]{h2009}, note that $\dg{X_2^k} \subseteq \dg{X_1^k} = \dg{\chi_\Pi^{qk}}$. Thus, there exists $p \in \bb{N}$ such that  $X_1^k > X_2^k$ for all $k \geq p$, i.e., $X$ is eventually nonnegative.
\end{proof}

\begin{ex}
We demonstrate \hyp{mainresultirr}{Theorem} via an example: consider the matrix
\begin{align*}
A =
\frac{1}{3}
\begin{bmatrix}
0 & 0 & 2 & 1 & 0 & 0 	\\
0 & 0 & 1 & 2 & 0 & 0	\\
0 & 0 & 0 & 0 & 2 & 1	\\
0 & 0 & 0 & 0 & 1 & 2	\\
2 & 1 & 0 & 0 & 0 & 0	\\
1 & 2 & 0 & 0 & 0 & 0
\end{bmatrix} \in \mat{6}{\bb{R}}.
\end{align*}
One can verify that $A = Z D \inv{Z}$, where
\begin{align*}
Z = \left[
\begin{array}{*{6}{r}}
1 & 1 & 1 & 1 & 1 & 1						\\
1 & 1 & 1 & -1 & -1 & -1						\\
1 & \omega & \omega^2 &  1 & \omega & \omega^2		\\
1 & \omega & \omega^2 & -1 &  -\omega &  -\omega^2	\\
1 & \omega^2 & \omega &  1 & \omega^2 & \omega		\\
1 & \omega^2 & \omega & -1 &  -\omega^2 &  -\omega		
\end{array}
\right],
\end{align*} 
and
\begin{align*}
D = \diag{1,\omega,\omega^2,\frac{1}{3},\frac{1}{3}\omega,\frac{1}{3}\omega^2}.
\end{align*}
Because $A$ has six distinct eigenvalues, following \hyp{thm_class_rts}{Theorem}, it has $2^6 = 64$ primary square roots (and no nonprimary roots).

The matrices
\begin{align*} 
\hat{D} = \diag{1,\omega^2,\omega,\frac{\sqrt{3}}{3},\frac{\sqrt{3}}{3}\omega^2,\frac{\sqrt{3}}{3}\omega}
\end{align*}
and
\begin{align*} 
\tilde{D} = \diag{1,\omega^2,\omega,-\frac{\sqrt{3}}{3},-\frac{\sqrt{3}}{3}\omega^2,-\frac{\sqrt{3}}{3}\omega}
\end{align*}
are square roots of $D$, and, following \hyp{mainresultirr}{Theorem}, the matrices
\begin{align*}
\hat{X}= Z \hat{D} \inv{Z} \approx 
\left[
\begin{array}{*{6}{r}}
0 & 0 & 0 & 0 & 0.7887 & 0.2113 \\
0 & 0 & 0 & 0 & 0.2113 & 0.7887 \\
0.7887 & 0.2113 & 0 & 0 & 0 & 0 \\         
0.2113 & 0.7887 & 0 & 0 & 0 & 0 \\      
0 & 0 & 0.7887 & 0.2113 & 0 & 0	 \\    
0 & 0 & 0.2113 & 0.7887 & 0 & 0  
\end{array}
\right]
\end{align*}
and
\begin{align*}
\tilde{X}= Z \tilde{D} \inv{Z} \approx 
\left[
\begin{array}{*{6}{r}}
0 & 0 & 0 & 0 & 0.2113 & 0.7887 \\
0 & 0 & 0 & 0 & 0.7887 & 0.2113 \\
0.2113 & 0.7887 & 0 & 0 & 0 & 0 \\         
0.7887 & 0.2113 & 0 & 0 & 0 & 0	\\      
0 & 0 & 0.2113 & 0.7887 & 0 & 0 \\    
0 & 0 & 0.7887 & 0.2113 & 0 & 0   
\end{array}
\right]
\end{align*}
are the only eventually nonnegative square roots of $A$.
\end{ex}

\begin{rem} \label{resultderogmatrices}
Following the notation of \hyp{mainresultirr}{Theorem}, note that all primary roots of $A$ are given by 
\begin{align*}
X =
Z
\begin{bmatrix}
F_{j_1} \left( J(\nu_h,1) \right)	& 							\\
			& F_{j_2} \left( J\left( \lambda_2 \nu_h, r_2 \right) \right) & &	\\
			& & \ddots 										\\
			& & & F_{j_t} \left( J\left( \lambda_t \nu_h, r_{t'} \right) \right)
\end{bmatrix}
\inv{Z}
\end{align*}
where $j_k \in \mathcal{B}$ for $k = 1,\dots, t'$, and subject to the constraint that $j_k = j_\ell$ (because $j_k$ and $j_\ell$ are ordered $h$-tuples, equality is meant entrywise) if $\lambda_k = \lambda_\ell$.

If $A$ is \emph{deregatory}, i.e., some eigenvalue appears in more than one Jordan block in the Jordan form of $A$, then $A$ has additional roots of the form 
\begin{align*}
X(U) =
ZU
\begin{bmatrix}
F_{j_1} \left( J(\nu_h,1) \right)	& 							\\
			& F_{j_2} \left( J\left( \lambda_2 \nu_h, r_2 \right) \right) & &	\\
			& & \ddots 										\\
			& & & F_{j_t} \left( J\left( \lambda_t \nu_h, r_{t'} \right) \right)
\end{bmatrix}
\inv{U} \inv{Z}
\end{align*}
where $U$ is any matrix that commutes with $J$ and subject to the constraint that $j_k \neq j_\ell$, if $\lambda_k = \lambda_\ell$. Select branches $j_1,j_2, \dots, j_t$ following the proof of \hyp{mainresultirr}{Theorem}. Following \cite[Theorem 1, \S 12.4]{lt1985}, note that the matrix $U$ must be of the form
\begin{align*}
\begin{bmatrix}
u_1 	& 					\\
 	& u_2 	& 				\\
 	& 	& \ddots 			\\
 	& 	& 		& u_h 		\\
 	& 	& 	& 	& \hat{U} 
\end{bmatrix}
\end{align*}
and $X(U)$ is real if $U$ is selected to be real. The matrix $X(U)$ is irreducible because $X(U)^p = A$. Moreover,
\begin{align*}
X_1 (U) 
&:= Z U
\begin{bmatrix}
F_{j_1} \left( J(\nu_h,1) \right) & 0 \\
0 & 0
\end{bmatrix}
\inv{U} \inv{Z} 
= X_1,						
\end{align*}
where $X_1$ is defined as in the proof of \hyp{mainresultirr}{Theorem}. Thus, $X_1(U)$ is a nonnegative, irreducible, $h$-cyclic matrix that commutes with $X(U)$, so that the argument demonstrating the eventual nonnegativity of $X$ is also valid for $X(U)$.
\end{rem}

\begin{cor} \label{realrootcor}
Let $A \in \mat{n}{\bb{R}}$ and suppose that $A \geq 0$, irreducible, and imprimitive with index of cyclicity $h$. Then $A$ possesses an eventually nonnegative $p$th-root if and only if $A$ possesses a real $p$th-root and $\gcd{(h,p)} = 1$.  
\end{cor}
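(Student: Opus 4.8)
The plan is to treat the two implications separately, keeping in force this section's standing assumptions that $A$ is nonsingular with $\sr{A}=1$. The nontrivial content of the forward direction is the arithmetic constraint $\gcd(h,p)=1$ (that $A$ has a real root being immediate), and this comes from \hyp{friedlandlemma}{Lemma} together with the peripheral structure of $A$ from \hyp{pftirr}{Theorem}. The reverse direction is essentially a bookkeeping application of \hyp{mainresultirr}{Theorem} and \hyp{resultderogmatrices}{Remark}.

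\emph{Necessity.} Suppose $X$ is an eventually nonnegative $p$th-root of $A$. By definition $X$ is real, so $A=X^p$ has a real $p$th-root. To obtain $\gcd(h,p)=1$, note that irreducibility forces $\sr{A}=1>0$, so neither $A$ nor $X$ is nilpotent and \hyp{friedlandlemma}{Lemma} applies to $X$: the multiset $\sig{X}$ is a union of self-conjugate Frobenius sets. Since algebraic multiplicities transfer under $Y\mapsto Y^p$ and $\sr{X}=\sr{A}^{1/p}=1$, the multiset of $p$th-powers of the eigenvalues of $X$ of modulus $\sr{X}$ is exactly $\peri{A}$, which by \hyp{pftirr}{Theorem} equals $\Omega_h$ with every element simple. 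In particular $\sr{X}=1$ is a simple eigenvalue of $X$, so it lies in a single Frobenius set of maximal spectral radius (every Frobenius set contains its spectral radius), whose peripheral part I call $\Omega_{h'}$; thus $\peri{X}=\Omega_{h'}$. Raising $\Omega_{h'}$ to the $p$th power is $\gcd(h',p)$-to-one, so simplicity of the elements of $\peri{A}$ forces $\gcd(h',p)=1$; then $z\mapsto z^p$ is a bijection of $\Omega_{h'}$ onto $\peri{A}=\Omega_h$, whence $h'=h$ and $\gcd(h,p)=1$.

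\emph{Sufficiency.} Assume $A$ has a real $p$th-root and $\gcd(h,p)=1$, and partition the Jordan form of $A$ as in \eqref{jordformirr}. If $p$ is odd, \hyp{mainresultirr}{Theorem} already produces $p^c\geq 1$ eventually nonnegative (hence real) primary $p$th-roots. If $p$ is even and $r_2=0$, \hyp{mainresultirr}{Theorem} produces $2^{r_1-1}p^c\geq 1$ such roots. If $p$ is even and $r_2>0$, then no eventually nonnegative primary root exists and we must produce a non-primary one; this is where the real-root hypothesis enters. By the classical criterion for real matrix $p$th-roots, when $p$ is even every negative real eigenvalue of $A$ has an even number of Jordan blocks of each size; equivalently, each cyclic block $J\left(\lambda\nu_h,r\right)$ comprising $J_-$ occurs with even multiplicity, so $A$ is derogatory at those blocks. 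One then runs the construction in the proof of \hyp{mainresultirr}{Theorem} and \hyp{resultderogmatrices}{Remark}, keeping the Perron block $J(\nu_h,1)$ (hence the dominant, nonnegative, irreducible, $h$-cyclic summand $X_1=A_1^q$) and the primary choices on $J_+$ and $J_\bb{C}$, but assigning to the two copies of each cyclic block of $J_-$ a conjugate pair of branch-selections. The resulting $p$th-root $X$ is real and, as in the proof of \hyp{mainresultirr}{Theorem}, splits as $X=X_1+X_2$ with $X_2$ $h$-cyclic, $\sr{X_2}<1$, and $\dg{X_2}\subseteq\dg{\chi_\Pi^q}$, so the same comparison argument shows $X$ is eventually nonnegative.

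The main obstacle is the case $p$ even with $r_2>0$: one must verify that pairing conjugate branch-selections on the two copies of a block of $J_-$, after the similarity by a matrix $U$ commuting with $J$ supplied by \hyp{resultderogmatrices}{Remark}, really yields a \emph{real} summand that is still $h$-cyclic with digraph contained in $\dg{\chi_\Pi^q}$, that is, that it remains compatible with the cyclic-commutator structure of \hyp{cycliccommutator}{Lemma}; granting this, the eventual-nonnegativity estimate carries over verbatim. A cleaner alternative, which I would attempt first, is to show this case is vacuous — that for an irreducible imprimitive $A$ with $p$ even and $\gcd(h,p)=1$ the mere existence of a real $p$th-root already forces $r_2=0$ — by exploiting the Perron--Frobenius and trace constraints on the primitive summands of $A^h$.
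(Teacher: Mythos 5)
Your overall route is the one the paper intends: the forward implication is the observation made right after \hyp{friedlandlemma}{Lemma} (an eventually nonnegative root is real, and $\gcd(h,p)>1$ is incompatible with Friedland's lemma once one knows $\peri{A}=\Omega_h$ with every peripheral eigenvalue simple), and the reverse implication is supposed to follow from \hyp{mainresultirr}{Theorem} together with the nonprimary construction of \hyp{resultderogmatrices}{Remark}. Your necessity argument is correct and in fact more detailed than the paper's one-line remark, and your sufficiency argument is complete in the cases $p$ odd and $p$ even with $r_2=0$, where the theorem alone already supplies an eventually nonnegative primary root.

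The genuine gap is the case $p$ even, $r_2>0$, which is the only case where the hypothesis that $A$ has a real $p$th root does any work. There you only sketch what is needed: one must exhibit a \emph{nonprimary} root, obtained by assigning conjugate branch selections to the paired copies of each cyclic block of $J_-$ and conjugating by a suitable $U$ commuting with $J$ as in \hyp{resultderogmatrices}{Remark}, and then verify both that this root is actually real and that its complementary summand $X_2$ remains $h$-cyclic with $\dg{X_2}\subseteq\dg{\chi_\Pi^q}$; without that digraph containment the comparison $X_1^k>X_2^k$ breaks down on the zero pattern of $X_1^k$ (recall $X_1$ is $h$-cyclic, never primitive, so $X_1^k$ always has zero blocks, and an arbitrarily small negative entry of $X_2^k$ there destroys eventual nonnegativity). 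You explicitly defer exactly this verification (``granting this\ldots''), so the hardest case of the corollary is not proved. Moreover, the fallback you say you would attempt first---showing the case is vacuous---is false. Take $C=\tfrac{1+b}{3}E-bI$ with $0<b<\tfrac12$, where $E$ is the $3\times3$ all-ones matrix: $C$ is symmetric, nonnegative, primitive, with spectrum $\{1,-b,-b\}$. Let $A$ be the $9\times9$ irreducible $3$-cyclic matrix with blocks $A_{12}=C$, $A_{23}=A_{31}=I$. Then $h=3$ and $p=2$ give $\gcd(h,p)=1$, while $\sigma(A)=\Omega_3\cup(-b^{1/3}\Omega_3)\cup(-b^{1/3}\Omega_3)$, so $r_2>0$; yet each negative eigenvalue of $A$ has two $1\times1$ Jordan blocks, so $A$ does possess a real square root. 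Hence this configuration genuinely occurs, any eventually nonnegative square root of such an $A$ is necessarily nonprimary, and a complete proof must carry out the conjugate-pairing argument (i.e., flesh out \hyp{resultderogmatrices}{Remark}) rather than avoid it.
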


\begin{thm} \label{thmforgeneralirrennmatrices}
Let $A \in \mat{n}{\bb{R}}$ and suppose that $A \geq 0$, irreducible, and imprimitive. If $\inv{Z} A Z = J = J_0 \oplus J_1$ is a Jordan canonical form of $A$, where $J_0$ collects all the singular Jordan blocks and $J_1$ collects the remaining Jordan blocks, and $A$ possesses a real root, then all eventually nonnegative $p$th-roots of $A$ are given by $A = Z \left( X_0 \oplus X_1 \right) \inv{Z}$, where $X_1$ is any $p$th-root of $J_1$ characterized by \hyp{mainresultirr}{Theorem} or \hyp{resultderogmatrices}{Remark}, and $X_0$ is a real $p$th-root of $J_0$.
\end{thm}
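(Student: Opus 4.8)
The plan is to reduce the statement to \hyp{mainresultirr}{Theorem} (and \hyp{resultderogmatrices}{Remark}) by peeling off the singular Jordan blocks and showing that any eventually nonnegative $p$th-root must be block-diagonal with respect to the splitting $J = J_0 \oplus J_1$. First I would observe that since $A \geq 0$ is irreducible and imprimitive but possibly singular, the nonsingular part $J_1$ collects exactly the Jordan blocks whose eigenvalues are nonzero; by \hyp{pftirr}{Theorem} the Perron eigenvalue $\rho>0$ lies in $J_1$, and rescaling we may assume $\rho = 1$ so that \hyp{mainresultirr}{Theorem} and \hyp{resultderogmatrices}{Remark} apply verbatim to $J_1$ (note $J_1$ itself need not be irreducible as a matrix, but it inherits the $h$-cyclic block structure via \hyp{jblockcor}{Corollary}, which is all those results actually use).

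Next I would argue the direction ``every eventually nonnegative $p$th-root has the claimed form.'' Let $X$ be such a root and write $Y := \inv{Z} X Z$, so $Y^p = J$. Because $J$ is nonderogatory on its nonsingular part in the relevant sense (or, more carefully, because $0$ and the nonzero eigenvalues are separated in the spectrum), any solution of $Y^p = J$ commutes with $J$, and a matrix commuting with $J = J_0 \oplus J_1$ respects the spectral decomposition into the kernel-generalized-eigenspace of $0$ and its complement; hence $Y = Y_0 \oplus Y_1$ with $Y_0^p = J_0$ and $Y_1^p = J_1$. The block $Y_0$ is nilpotent (a $p$th-root of a nilpotent matrix is nilpotent), so it contributes nothing to the eventual sign pattern; in particular eventual nonnegativity of $X$ is equivalent to eventual nonnegativity of the matrix obtained by replacing the $J_0$-block by the \emph{particular} real root we want, and $Y_1$ is forced to be a real root of $J_1$. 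By \hyp{realrootcor}{Corollary} applied to (the irreducible matrix whose Jordan form is) $J_1$ — or directly by the construction in \hyp{mainresultirr}{Theorem} — $Y_1$ must be one of the roots enumerated there, i.e. of the form $F_{j_1}(J(\nu_h,1)) \oplus \cdots$ as in \hyp{resultderogmatrices}{Remark} (including the derogatory families $X_1(U)$). Since the hypothesis ``$A$ possesses a real root'' is exactly what guarantees such a $Y_1$ exists and that $J_0$ admits a real $p$th-root $X_0$, this pins down all eventually nonnegative roots.

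For the converse direction — that every matrix $Z(X_0 \oplus X_1)\inv{Z}$ of the stated form \emph{is} an eventually nonnegative root — I would reuse the eventual-nonnegativity argument from the proof of \hyp{mainresultirr}{Theorem} essentially unchanged. Writing $X = Z(X_0 \oplus X_1)\inv{Z}$ and decomposing $X_1$ as in that proof into the $h$-cyclic ``dominant'' piece $X_1^{(+)}$ (the root of the $J(\nu_h,1)$-block, which equals $A_1^q$ for a suitable $q$ coprime to $h$ and is genuinely nonnegative and irreducible) plus a strictly subdominant piece, one gets $X^k = (X_1^{(+)})^k + E_k$ where $E_k \to 0$ and, by \cite[Theorem 2.7]{h2009} together with the digraph containments already established in the proof of \hyp{mainresultirr}{Theorem}, $\dg{E_k} \subseteq \dg{(X_1^{(+)})^k}$ for all large $k$; adding the nilpotent $X_0$-block only affects finitely many powers. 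Hence $X^k \geq 0$ for all $k$ large. The main obstacle, and the one step I would be most careful about, is the commutation/block-diagonalization claim: one must verify that a $p$th-root of $J_0 \oplus J_1$ really cannot ``mix'' the singular and nonsingular parts. This follows because $0 \notin \sig{J_1}$ forces any $Y$ with $Y^p = J$ to commute with $J$ (two functions of $J$ and a polynomial identity argument, or the uniqueness-of-square-free-factorization of the minimal polynomial), and a matrix commuting with $J$ preserves each generalized eigenspace; I would spell this out as a short lemma before invoking \hyp{mainresultirr}{Theorem}.
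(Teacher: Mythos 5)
The paper gives no proof of this theorem---it is stated as a direct consequence of \hyp{mainresultirr}{Theorem} and \hyp{resultderogmatrices}{Remark}---so there is no written argument to compare yours against; your reduction is the natural one and is essentially correct. Three calibration points. First, the block-diagonalization step needs none of your hedging: for any $Y$ with $Y^p = J$ one has $YJ = Y\,Y^p = Y^p\,Y = JY$ unconditionally, and a matrix commuting with $J$ leaves $\ker J^m$ and the range of $J^m$ invariant ($m$ the index of the eigenvalue $0$), which in the Jordan basis are exactly the coordinate subspaces carrying $J_0$ and $J_1$; hence $Y = Y_0 \oplus Y_1$ with $Y_0^p = J_0$ and $Y_1^p = J_1$, with no appeal to nonderogatoriness, to $0 \notin \sig{J_1}$, or to a polynomial-identity argument. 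Second, in the forward direction the appeal to \hyp{realrootcor}{Corollary} is not quite the right tool (it is an existence statement about roots of the nonnegative irreducible matrix $A$, not a characterization of roots of the Jordan matrix $J_1$); the necessity that $Y_1$ be among the listed roots should be argued, as in the proof of \hyp{mainresultirr}{Theorem}, from \hyp{friedlandlemma}{Lemma} together with reality: since $Y_0$ is nilpotent, eventual nonnegativity of the root is equivalent to that of the matrix with the $J_0$-slot zeroed out, whose spectrum must then be a union of self-conjugate Frobenius sets, and this pins the admissible branch choices to those enumerated in \hyp{mainresultirr}{Theorem} and \hyp{resultderogmatrices}{Remark}. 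Third, when you recycle \hyp{jblockcor}{Corollary}, \hyp{cycliccommutator}{Lemma}, \hyp{cycliccor}{Corollary} and the digraph containments from the proof of \hyp{mainresultirr}{Theorem}, note that those are stated in the paper under a blanket nonsingularity hypothesis, whereas here $A$ may be singular; you should say explicitly that they are applied only to the blocks with nonzero eigenvalues (the Perron vectors and the matrix $A_1$ are unaffected by the singular blocks), and that in your splitting the $J_0$-slot is occupied by an actual zero block rather than a nilpotent Jordan block, so $\rk{X_2^2} = \rk{X_2}$ persists and \cite[Theorem 2.7]{h2009} still applies. With those points spelled out, your argument is complete and is what the paper intends.
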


Although the following result is known (see \cite[Algorithm 3.1 and its proof]{h2009}), our work provides another proof.

\begin{cor} \label{evennongen}
If $A \in \mat{n}{\bb{R}}$ is irreducible with index of cyclicity $h$, where $h>1$, and $\Pi$ describes the $h$-cyclic structure of $A$, then $A$ is eventually nonnegative if and only if there exists a nonsingular matrix $Z$ such that
\begin{align*}
\inv{Z} A Z =
\begin{bmatrix}
J( \nu_h, 1) &  \\
 & \hat{J}  
\end{bmatrix},
\end{align*} 
and, associated with $\sr{A} = 1 \in \sig{A}$, is a positive left eigenvector $x$ and right eigenvector $y$. 
\end{cor}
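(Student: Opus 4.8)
The plan is to establish the two implications separately. Throughout, $\hat{J}$ collects the Jordan blocks whose eigenvalues have modulus strictly less than $1$ (so that $J(\nu_h,1)$ accounts for the entire peripheral spectrum), and I assume $A$ is not nilpotent, since a nilpotent $A$ has $\sr{A}=0$ and falls outside the scope of the right-hand condition. For the sufficiency ($\Leftarrow$) I would essentially reproduce the eventual-nonnegativity argument at the end of the proof of \hyp{mainresultirr}{Theorem}. Assuming $\inv{Z}AZ=J(\nu_h,1)\oplus\hat{J}$ and that the positive eigenvectors associated with $\sr{A}=1$ exist, put
\[
A_1:=Z\begin{bmatrix}J(\nu_h,1)&0\\0&0\end{bmatrix}\inv{Z},\qquad A_2:=Z\begin{bmatrix}0&0\\0&\hat{J}\end{bmatrix}\inv{Z},
\]
so that $A=A_1+A_2$ with $A_1A_2=A_2A_1=0$, hence $A^k=A_1^k+A_2^k$ for all $k$. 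By \hyp{cycliccor}{Corollary} and the explicit formula for $A_1$ recorded at the end of Section 2 (which uses the positivity of the eigenvectors), $A_1\ge0$ is irreducible, $h$-cyclic, and $\dg{A_1}=\dg{\chi_\Pi}$; by \hyp{jblockcor}{Corollary} and \hyp{cycliccommutator}{Lemma}, $\dg{A_2}\subseteq\dg{\chi_\Pi}$; and since $\sr{\hat{J}}<1$, $\lim_\rinf{k}A_2^k=0$. As $J(\nu_h,1)=\diag{1,\omega,\dots,\omega^{h-1}}$ and $\omega^h=1$, one has $A_1^{k+h}=A_1^k$ for all $k\ge1$, so $\{A_1^k:k\ge1\}$ is finite, each member nonnegative, its nonzero entries bounded below by some $c>0$; and since powers of a nonnegative matrix incur no cancellation, $\dg{A_1^k}=\dg{\chi_\Pi^k}\supseteq\dg{A_2^k}$ for all $k\ge1$. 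Choosing $k$ large enough that every entry of $A_2^k$ has modulus below $c$ then forces $A^k=A_1^k+A_2^k\ge0$, since $A_1^k$ dominates $A_2^k$ entrywise in modulus wherever it is nonzero and both vanish elsewhere.

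For the necessity ($\Rightarrow$), after rescaling so that $\sr{A}=1$, I would fix an integer $k\ge p(A)$ with $\gcd(h,k)=1$ and pass to $B:=A^k\ge0$. The key point is to show that $B$ is irreducible and imprimitive with index of cyclicity $h$; granting this, \hyp{pftirr}{Theorem} applied to $B$ gives that $1=\sr{B}$ is an algebraically simple eigenvalue of $B$ with positive right and left eigenvectors $v,w$ and that $\peri{B}=\Omega_h$, each element algebraically simple. Since $\sig{B}=\{\mu^k:\mu\in\sig{A}\}$ counted with multiplicity, simplicity of $1$ in $\sig{B}$ forces a unique $\mu\in\sig{A}$ with $\mu^k=1$, necessarily algebraically simple and with eigenvector $v$; and since $A^\ell v=\mu^\ell v\ge0$ entrywise for all large $\ell$ while $v>0$, $\mu$ must be real and nonnegative, hence $\mu=1$. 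Thus $v$ — and, by the same argument applied to $A^\tr$, $w$ — is the required positive right (respectively left) eigenvector of $A$ for $\sr{A}=1$, and $1$ is algebraically simple for $A$; combined with \hyp{friedlandlemma}{Lemma} this excludes two distinct Frobenius sets of spectral radius $1$ in the decomposition of $\sig{A}$ (they would share the point $1$), so $\peri{A}=\Omega_{h'}$ for a single $h'$, and matching with $\peri{B}=\Omega_h$ through the bijection $\mu\mapsto\mu^k$ of $\Omega_h$ yields $h'=h$. Finally, each $\omega^j\in\peri{A}=\Omega_h$ is algebraically simple for $A$ because $\omega^{jk}$ is for $B$, so the peripheral part of the Jordan form of $A$ is $\bigoplus_{j=0}^{h-1}\jordan{1}{\omega^j}=J(\nu_h,1)$, every remaining eigenvalue has modulus below $1$, and the desired decomposition and positive eigenvectors follow.

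I expect the main obstacle to be the key point of the necessity argument: that a suitable nonnegative power $B=A^k$ with $\gcd(h,k)=1$ of a non-nilpotent, irreducible, imprimitive $A$ is again irreducible with index of cyclicity $h$. For nonnegative $A$ this is classical, but in general one must rule out that cancellation among the signed entries of $A$ destroys the strong connectivity of $\dg{A^k}$ or lowers its index of imprimitivity. I anticipate handling this by combining non-nilpotency with \hyp{friedlandlemma}{Lemma} (which constrains $\sig{A}$, and hence $\sig{A^k}$, to be a union of self-conjugate Frobenius sets, and in particular keeps $\sr{A^k}$ positive) and the combinatorial structure of $A$ — the closed walks of $A$ that survive cancellation should already force $\dg{A^k}$ to be strongly connected with index $h$ — possibly with recourse to results on the digraphs of matrix powers along the lines of \cite[Theorem 2.7]{h2009}. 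Once $B$ is known to be nonnegative, irreducible, and imprimitive of index $h$, the rest of the necessity direction is routine bookkeeping with \hyp{pftirr}{Theorem} and \hyp{jblockcor}{Corollary}.
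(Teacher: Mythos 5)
Your sufficiency argument is, in substance, exactly the paper's: the paper's ``converse is clear'' step is precisely your decomposition $A=A_1+A_2$ with $A_1\geq 0$ built from the positive eigenvectors (the explicit formula at the end of Section 2), $\dg{A_2^k}\subseteq\dg{\chi_\Pi^k}=\dg{A_1^k}$, periodicity of $A_1^k$, and $A_2^k\rightarrow 0$ since $\sr{\hat{J}}<1$; that half is fine (your reading that $\hat{J}$ carries only eigenvalues of modulus less than one is the intended one).

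The necessity half, however, contains a genuine gap, and it is the one you flag yourself: your entire argument hinges on the claim that $B=A^k$ (with $k\geq p(A)$, $\gcd(h,k)=1$) is irreducible and imprimitive with cyclic index exactly $h$, and you never prove it --- you only ``anticipate'' that \hyp{friedlandlemma}{Lemma} together with surviving closed walks will force it. Friedland's lemma is a purely spectral statement and says nothing about $\dg{A^k}$; and the difficulty is real: even for a nonnegative irreducible $A$ the power $A^h$ is reducible, so coprimality must be exploited combinatorially, and for a sign-indefinite eventually nonnegative $A$ one must additionally rule out cancellation destroying strong connectivity, which your sketch does not address. The paper takes a different and much shorter route for this direction: choose $p\geq p(A)$ coprime to $h$ so that $A^p\geq 0$, observe that $A$ is an eventually nonnegative $p$th root of $A^p$, and invoke \hyp{thmforgeneralirrennmatrices}{Theorem} (resting on \hyp{mainresultirr}{Theorem} and \hyp{resultderogmatrices}{Remark}), whose characterization of all such roots already delivers the peripheral Jordan block $J(\nu_h,1)$ (the chosen branch permutes $\Omega_h$, each peripheral eigenvalue simple) together with the Perron vectors of $A^p$ as the positive right and left eigenvectors for the eigenvalue $1$; nothing about $\sig{A}$ or its eigenvectors has to be re-derived from Perron--Frobenius theory of $B$. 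So either route your necessity argument through that theorem, as the paper does, or actually supply a proof of the combinatorial claim about $\dg{A^k}$ (for instance via the results on digraphs of powers of $h$-cyclic matrices in the paper's reference \cite{h2009}); as written, the pivotal step of your plan is missing.
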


\begin{proof}
If $A$ is eventually nonnegative, select $p$ relatively prime to $h$ such that $A^p \geq 0$. Then $A$ is a $p$th-root of $A^p$ and the result follows from \hyp{thmforgeneralirrennmatrices}{Theorem}.

The converse is clear by setting 
\[ X_1 = Z \begin{bmatrix} J( \nu_h, 1) & 0 \\ 0 & 0 \end{bmatrix} \inv{Z} \] 
and 
\[ X_2 = Z \begin{bmatrix} 0 & 0 \\ 0 & \hat{J} \end{bmatrix} \inv{Z}. \] 
\end{proof}

\begin{rem}
\hyp{thmforgeneralirrennmatrices}{Theorem} remains true if the assumption of nonnegativity is replaced with eventual nonnegativity.
\end{rem}

\begin{rem}
For $A \in \mat{n}{\bb{C}}$ with no eigenvalues on $\bb{R}^-$, the {\it principal} $p$th-root, denoted by $A^{1/p}$, is the unique $p$th-root of $A$ all of whose eigenvalues lie in the segment $\{ z : -\pi/p < \arg(z) < \pi/p\}$ \cite[Theorem 7.2]{h2008}. A nonnegative matrix $A$ is \emph{stochastic} if $\sum_j a_{ij} = 1$ for all $i=1,\dots,n$. Following \hyp{omegahfracpwr}{Theorem}, the principal $p$th-root of an imprimitve irreducible stochastic matrix is never stochastic. 
\end{rem}

\subsection{Reducible Matrices}

Identifying the eventually nonnegative matrix roots of nonnegative reducible matrices poses many obstacles, chief of which is controlling the entries of the off-diagonal blocks in the Frobenius normal form. Moreover, the assumption that $\gcd{(h,p)} = 1$ is not necessarily required; for instance, consider the matrix
\begin{align*}
A =
\begin{bmatrix}
0 & 1 & 0 & 0 \\
0 & 0 & 1 & 0 \\
0 & 0 & 0 & 1 \\
1 & 0 & 0 & 0
\end{bmatrix}
\end{align*}
and note that the matrix
\begin{align*}
B: =
\begin{bmatrix}
0 & 0 & 1 & 0 \\
0 & 0 & 0 & 1 \\
1 & 0 & 0 & 0 \\
0 & 1 & 0 & 0
\end{bmatrix} = A^2
\end{align*}
is a reducible 2-cyclic matrix; $\sig{B} = \Omega_2 \cup \Omega_2$; $B$ obviously possesses an irreducible nonnegative square root; and $\gcd{(2,2)} = 2 > 1$.

\begin{mydef}
A matrix $A \in \mat{n}{\bb{C}}$ is said to be \emph{completely reducible} if there exists a permutation matrix $P$ such that 
\begin{align}
P^\top A P 
= \bigoplus_{i=1}^k A_{ii}
= 
 \left[
 \begin{array}{ccc}
 A_{11} &  & \multirow{2}{*}{\large 0} \\
 \multirow{2}{*}{\large 0} & \ddots &  \\
  &  & A_{kk}
 \end{array}
 \right],												\label{compreducibleform}
\end{align}	
where $k \geq 2$ and $A_{11}, \dots, A_{kk}$ are square irreducible matrices.									
\end{mydef} 

\begin{rem}
Following the definition, if $A$ is completely reducible, then, without loss of generaltiy, we may assume $A$ is in the form of the matrix on the right-hand side of \eqref{compreducibleform}. Furthermore, it is clear that $A$ is eventually nonnegative if and only if $A_{11}, \dots, A_{kk}$ are eventually nonnegative. 
\end{rem}

The following is corollary to \hyp{realrootcor}{Corollary} and \hyp{thmforgeneralirrennmatrices}{Theorem} .

\begin{cor}
Let $A$ be eventually nonnegative, nonsingular, and completely reducible. Let $h_i$ denote the cyclicity of $A_{ii}$ for $i=1,\dots,k$. If each $A_{ii}$ possesses a real $p$th-root, then $A$ possesses an eventually nonnegative $p$th-root if and only if $\gcd{(h_i,p)} = 1$ for all $i$.
\end{cor}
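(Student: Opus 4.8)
The plan is to reduce the statement to the two results it is advertised as a corollary of, by exploiting the block-diagonal structure guaranteed by complete reducibility. First I would recall that, since $A$ is completely reducible, we may assume (after the permutation) that $A = \bigoplus_{i=1}^k A_{ii}$, with each $A_{ii}$ irreducible and nonsingular. The key structural observation is that $p$th-roots of a direct sum need not respect the block structure in general, but eventually nonnegative $p$th-roots do. Specifically, I would first verify that any eventually nonnegative $p$th-root $X$ of $A$ must itself be (permutationally) block-diagonal conformally with $A$. This follows because $X^p = A$ is block-diagonal and each diagonal block $A_{ii}$, being irreducible, is a ``communicating class'' in the digraph sense; since $X$ and $A$ commute and $A$'s digraph is a disjoint union of strongly connected components on the index sets of the $A_{ii}$, the nonzero pattern of $X$ cannot connect distinct blocks without forcing a corresponding connection in some power of $A$. (Alternatively, and perhaps more cleanly, invoke the Remark that $A$ is eventually nonnegative iff each $A_{ii}$ is, together with the observation that $X$ eventually nonnegative forces $X^k$ block-diagonal for large $k$, hence $X$ block-diagonal.) Thus $X = \bigoplus_{i=1}^k X_i$ where each $X_i$ is a $p$th-root of $A_{ii}$, and $X$ is eventually nonnegative iff each $X_i$ is.

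Next I would apply \hyp{realrootcor}{Corollary} to each block: $A_{ii}$ possesses an eventually nonnegative $p$th-root if and only if $A_{ii}$ possesses a real $p$th-root and $\gcd(h_i,p)=1$. Here I must be slightly careful: \hyp{realrootcor}{Corollary} as stated covers the \emph{imprimitive} irreducible case (where $h_i > 1$). For blocks $A_{ii}$ that are primitive, i.e. $h_i = 1$, the condition $\gcd(h_i,p) = 1$ is vacuously true, and a primitive irreducible nonnegative matrix with a real $p$th-root always has an eventually nonnegative one (its principal $p$th-root, or the branch selected by the strong Perron--Frobenius property, is eventually nonnegative by the eventually-positive theory of \cite{mpt2014}); so the biconditional still holds in the degenerate case. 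With this in hand, the chain of equivalences is: $A$ has an eventually nonnegative $p$th-root $\iff$ each $A_{ii}$ has one $\iff$ each $A_{ii}$ has a real $p$th-root and $\gcd(h_i,p)=1$ $\iff$ (given the hypothesis that each $A_{ii}$ has a real $p$th-root) $\gcd(h_i,p)=1$ for all $i$.

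For the ``if'' direction of the last equivalence one must also produce the root, not merely the block-wise roots; but this is immediate since $\bigoplus_i X_i$ is a $p$th-root of $A$ and the eventual nonnegativity of a finite direct sum is the conjunction of the eventual nonnegativities of the summands (take $p$ larger than the maximum of the individual power indices). I would therefore organize the write-up as: (1) reduce to the block-diagonal normal form; (2) prove that eventually nonnegative roots are block-diagonal; (3) invoke \hyp{realrootcor}{Corollary} and \hyp{thmforgeneralirrennmatrices}{Theorem} block by block; (4) assemble.

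The main obstacle I anticipate is step (2): showing that an eventually nonnegative $p$th-root of a block-diagonal matrix must be block-diagonal. The subtlety is that a \emph{general} $p$th-root of $\bigoplus A_{ii}$ need not be block-diagonal when eigenvalues are shared across blocks (nonprimary roots via commuting $U$'s can mix blocks). The resolution is to use eventual nonnegativity crucially: if $X$ is eventually nonnegative then $X^k \geq 0$ for $k \geq p(X)$, and $X^k \to$ something whose reducibility structure is controlled by $A = X^p$; since each $A_{ii}$ is irreducible, a Frobenius-normal-form / combinatorial argument (along the lines of \cite{h2009}) forces the eventual powers, and hence $X$ itself (as $X = X^{1 + k \cdot p}$ for suitable $k$ once $X^p = A$), to be block-diagonal conformally with $A$. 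I would cite the relevant combinatorial lemma rather than reprove it.
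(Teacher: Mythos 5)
Your argument stands or falls with step (2)---the claim that every eventually nonnegative $p$th-root of $A=\bigoplus_{i=1}^k A_{ii}$ must itself be block-diagonal conformally with the $A_{ii}$---and that claim is false. The paper's own example in this very subsection refutes it: the $4$-cycle permutation matrix is a nonnegative (hence eventually nonnegative) square root of $B=A^2$, which is completely reducible with two irreducible diagonal blocks (on the vertex sets $\{1,3\}$ and $\{2,4\}$), yet this root is irreducible, so it mixes the classes. Nor does the corollary's extra hypothesis repair the claim: with $N=\left[\begin{smallmatrix}1&1\\1&0\end{smallmatrix}\right]$ and $C=N^2$, the matrix $X=\left[\begin{smallmatrix}0&C\\I&0\end{smallmatrix}\right]$ is a nonnegative, irreducible square root of $C\oplus C$, even though each diagonal block $C$ is primitive, nonsingular, and possesses the real square root $N$. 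Your supporting arguments fail on these examples: commutation with $A$ does not prevent arcs of $X$ between distinct classes (such arcs need not produce between-class arcs in $X^p=A$, since walk lengths compound); $X^k$ need not be block-diagonal for all large $k$ (the odd powers of $\left[\begin{smallmatrix}0&C\\I&0\end{smallmatrix}\right]$ never are); and $X^{1+kp}=XA^k$ is not $X$ in general.

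Consequently the first link of your chain of equivalences, ``$A$ has an eventually nonnegative $p$th-root $\Rightarrow$ each $A_{ii}$ has one,'' cannot be obtained from eventual nonnegativity and combinatorics alone; it is exactly the place where the hypothesis that every $A_{ii}$ possesses a real $p$th-root must enter (in the paper's $B=A^2$ example the $2\times2$ blocks have no real square root, which is precisely why an eventually nonnegative root exists there despite $\gcd(h_i,p)=2$). A workable ``only if'' argument is spectral rather than combinatorial: $X$ is nonsingular, so by \hyp{friedlandlemma}{Lemma} $\sigma(X)$ is a union of self-conjugate Frobenius sets whose $p$th powers must reproduce $\bigcup_i\sigma(A_{ii})$, where each block contributes peripheral spectrum $\rho_i\Omega_{h_i}$; one then has to combine \hyp{omegahfracpwr}{Theorem} with the real-root hypothesis to exclude the block-mixing roots, i.e.\ the situation in which several copies of $\rho_i\Omega_{h_i}$ arise as the $p$th power of a single larger Frobenius set. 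Your ``if'' direction is essentially fine (direct sums of blockwise roots, \hyp{realrootcor}{Corollary} applied to eventually nonnegative blocks as in the paper's remark following \hyp{thmforgeneralirrennmatrices}{Theorem}, and the eventually positive theory of \cite{mpt2014} when $h_i=1$). The paper itself gives no written proof, asserting the result as a consequence of \hyp{realrootcor}{Corollary} and \hyp{thmforgeneralirrennmatrices}{Theorem}, so your blockwise plan is in the intended spirit; but the blockwise reduction is the nontrivial point, and your purely combinatorial justification of it does not hold.
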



\bibliographystyle{abbrv}
\bibliography{laabib,crs}

\end{document}